\documentclass{article}
\usepackage{amsmath,amssymb,amsthm,fullpage,ytableau,graphicx,xcolor,subcaption}

\newtheorem{thm}{Theorem}[section]
\newtheorem{lem}[thm]{Lemma}

\newtheorem{cor}[thm]{Corollary}
\theoremstyle{definition}
\newtheorem{dfn}{Definition}[section]
\newtheorem{exa}{Example}[section]

\newcommand{\NDL}{\operatorname{NDL}}

\title{Neighborhood degree lists of graphs}
\author{Michael D. Barrus\\
\small Department of Mathematics\\
\small University of Rhode Island\\
\small Kingston, RI 02881\\
\small \tt barrus@uri.edu\\
\\
Elizabeth Donovan\\
\small Department of Mathematics and Statistics\\
\small Murray State University\\
\small Murray, KY 42071\\
\small \tt edonovan@murraystate.edu
}

\begin{document}
\maketitle
\begin{abstract}
The neighborhood degree list (NDL) is a graph invariant that refines information given by the degree sequence and joint degree matrix of a graph and is useful in distinguishing graphs having the same degree sequence. We show that the space of realizations of an NDL is connected via a switching operation. We then determine the NDLs that have a unique realization by a labeled graph; the characterization ties these NDLs and their realizations to the threshold graphs and difference graphs.
\end{abstract}

%%%%%%%%%%%%%%%%%%%%%%%%%%%%%%%%%%%%%%%%%
% Section 1                             %
%%%%%%%%%%%%%%%%%%%%%%%%%%%%%%%%%%%%%%%%%
\section{Introduction}
\label{sec: intro}

Though the degree sequence of a graph is one of the simplest possible invariants of a graph, it has attracted considerable interest and yielded beautiful results. Several different tests are known for determining if a list of integers is a degree sequence, and many authors have written about the properties that the graphs having a given degree sequence (the \emph{realizations} of the sequence) can or must have.

In particular, several authors have asked or answered questions concerning the uniqueness of realizations. In the case of strict uniqueness, where there is only one possible realization of a degree sequence once degrees are prescribed for labeled vertices, the degree sequences involved are the threshold sequences; their realizations are called threshold graphs (see the monograph~\cite{MahadevPeled95} for a survey). A more relaxed question of uniqueness requires that there only be one realization of the degree sequence up to isomorphism (the degree sequence $(1,1,1,1)$, for instance, has three distinct realizations but only one up to isomorphism). Degree sequences with realizations from a unique isomorphism class are called unigraphic, and their realizations are unigraphs. For a discussion of unigraphs and a good bibliography, see~\cite{Tyshkevich00}. (When questions of uniqueness are addressed in later sections of this paper, our understanding of uniqueness will be in the former sense, where isomorphism classes are ignored and graphs with distinct edge sets are considered to be distinct.)

The degree sequence is not the only descriptive parameter based on the degrees of vertices.  In~\cite{PatrinosHakimi76}, Patrinos and Hakimi considered \emph{integer-pair sequences}, collections of unordered pairs of integers produced by recording the degrees of the two endpoints of each edge in a graph (or pseudograph, multigraph, etc.). They determined which sequences of integer pairs can be realized by a graph, pseudograph, or multigraph, and  later Das~\cite{Das81}  characterized the integer-pair sequences that correspond to a single graph (up to isomorphism). More recently, authors have studied a reformulation of integer-pair sequences known as the \emph{joint degree matrix}, where multiplicities of integer pairs are recorded in matrix form~\cite{AGM08,CDEM15,StantonPinar12}.

The integer-pair sequence and joint degree matrix yield more information about a graph than a degree sequence does, and in this paper we introduce a new degree-related parameter, the \emph{neighborhood degree list} (NDL), that yields still more. The neighborhood degree list of a graph $G$ is a list
\[\tau(G)=((\tau^1_1,\dots,\tau^1_{d_1}),\dots,(\tau^n_1,\dots,\tau^n_{d_n}))\] whose elements are lists of the degrees in $G$ of the neighbors of a given vertex. For example, if $G$ is the graph obtained by attaching a pendant vertex to a chordless 4-cycle, then $\tau(G) = ((2,2,1),(3,2),(3,2),(2,2),(3))$. Notice how the degree sequence $(3,2,2,2,1)$ of $G$ is apparent from the lengths of the elements of $\tau(G)$ (we call these elements the \emph{component lists}). The order of the main list, together with the order of integers within component lists, is usually of little consequence, though for convenience we will order the integers within a component list from largest to smallest and will list the component lists in descending order of length.

We can represent an NDL graphically by placing the integers it contains into the Young diagram of its degree sequence in much the same way Young tableaux are represented, with the terms of one component list per row. For example, we depict the NDL $((2,2,1),(3,2),(3,2),(2,2),(3))$ from above by the diagram in Figure~\ref{fig: firsttableau}. Note that our orderings of numbers within rows and columns of the diagram does not follow typical monotonicity rules for Young tableaux.
\begin{figure}
\centering
\ytableausetup{centertableaux}
\begin{ytableau}
2 & 2 & 1 \\
3 & 2 \\
3 & 2 \\
2 & 2 \\
3
\end{ytableau}
\caption{A graphical depiction of $((2,2,1),(3,2),(3,2),(2,2),(3))$.}
\label{fig: firsttableau}
\end{figure}

Besides providing more information than the degree sequence and the integer-pair sequence or joint degree matrix, our motivation for studying neighborhood degree lists comes from a few contexts. In situations where it becomes necessary to distinguish between nonisomorphic graphs having the same degree sequence, it may be possible to do so by consulting their NDLs. The first author used NDLs in this way in a proof in~\cite{Barrus12} (see Theorem 2.3 therein). 

We find another application in graph reconstruction, which we now describe. The well known Graph Reconstruction Conjecture, attributed to Kelly~\cite{Kelly42,Kelly57} and Ulam~\cite{Ulam60}, states that every $n$-vertex graph (where $n \geq 3$) is uniquely determined up to isomorphism by the multiset of its induced subgraphs of order $n-1$. These subgraphs are called the \emph{cards} of the graph, and the collection of cards is called the \emph{deck}. Given the deck of an unknown graph $G$, a standard counting argument yields the degree sequence of $G$: first, sum the numbers of edges in each of the cards; denote the result by $s$. The graph $G$ then contains exactly $s/(n-2)$ edges, since each edge appears in all but two of the cards. Given any card of $G$, we can then subtract the number of edges in the card from the number $s/(n-2)$ to determine the degree of the missing vertex. Doing this for each card in turn yields the degree sequence of $G$.

However, the degree sequence is not all we can determine in this way. By comparing the degree sequence of the card and the degree of the missing vertex to the degree sequence of the graph $G$, it is possible to determine the degrees in $G$ of the vertices to which the missing vertex is adjacent. Thus our counting argument yields not only the degree sequence but also the neighborhood degree list of $G$. (It also shows that the Reconstruction Conjecture is true for graphs that are uniquely determined, up to isomorphism, by their NDLs.)

In this paper we initiate a study of the NDL of a graph. In Section~\ref{sec: graphic NDLS} we characterize the NDLs of simple graphs and describe how to construct the realizations of one. In Section~\ref{sec: N-switches} we present an edge-switching operation for transforming one realization of an NDL into any other realization of the same NDL. Finally, in Section~\ref{sec: NDL-unique}, we determine the NDL analogues of threshold sequences and graphs by determining which NDLs have unique realizations by labeled graphs, and which graphs these are.

Throughout the paper, we use $V(G)$ to denote the vertex set of a graph $G$. We denote the degree sequence of $G$ by $\deg(G)$, and the degree in $G$ of a vertex $v$ by $\deg_G(v)$ or by $\deg(v)$, if $G$ is understood from the context.

%%%%%%%%%%%%%%%%%%%%%%%%%%%%%%%%%%%%%%%%%
% Section 2                             %
%%%%%%%%%%%%%%%%%%%%%%%%%%%%%%%%%%%%%%%%%
\section{Feasible tableaux}
\label{sec: graphic NDLS}
In this section we characterize those lists of lists of integers that are neighborhood degree lists of simple graphs. We say that a \emph{tableau} is a list %
\begin{equation}\label{eq:TableauNotation}
T=((\tau^1_1,\dots,\tau^1_{d_1}),\dots,(\tau^n_1,\dots,\tau^n_{d_n}))
\end{equation} %
of $n$ lists of nonnegative integers, where the lengths $d_1,\dots,d_n$ of the component lists are in descending order, as are the terms $\tau^i_1,\dots,\tau^i_{d_i}$ in each component list. 
A tableau is \emph{feasible} if %the lengths $d_1,\dots,d_n$ form the degree sequence of a simple graph, and, for each integer $k$ that appears in any of the component lists $(\tau^i_1,\dots,\tau^i_{d_i})$, the total number of times $k$ appears in the tableau is equal to $k$ times the number of lengths $d_i$ that equal $k$.
each integer appearing in the tableau is equal to one of the terms $d_i$.

We now define some notation that will be used throughout the paper. Let the tableau $T$ in \eqref{eq:TableauNotation} be a feasible tableau, and let $d=(d_1,\dots,d_n)$. For any integer $k$ appearing in $d$, let $V_k$ denote the subset of $\{1,\dots,n\}$ containing elements $i$ for which $d_i = k$.

For each $i \in V_k$ and integer $\ell$ appearing in $d$, let $\mu^{\ell}_{i}$ denote the number of times the term $\ell$ appears in the tableau component list $(\tau_1^i,\dots,\tau_{k}^i)$.

For each integer $k$ appearing in $d$, let $D^k$ be the list consisting of the terms $\mu^k_i$ for all $i \in V_k$; while order of terms in $D^k$ will largely be unimportant, we may stipulate that the terms $\mu^k_i$ be arranged in increasing order of $i$. For distinct integers $k$ and $\ell$ both appearing in $d$, with $k>\ell$, let $D^{k,\ell}$ be the bipartitioned list in which the first part contains the terms $\mu^{\ell}_i$ for all $i \in V_k$, and the second part contains the terms $\mu^k_i$ for all $i \in V_\ell$; terms in both parts may be assumed to be arranged in increasing order of $i$.

As an example, in the tableau $((2,2,1),(3,2),(3,2),(2,2),(3))$ illustrated in the previous section, we may write $D^1=(0)$ and $D^2 = (1,1,2)$ and $D^3=(0)$; also $D^{2,1}=(0,0,0;0)$ and $D^{3,1}=(1;1)$ and $D^{3,2}=(2;1,1,0)$. As shown in Figure~\ref{fig: subtableau}, in a graphical representation of the tableau, computing $D^k$ corresponds to counting those boxes containing a $k$ in rows of length $k$ while $D^{k,\ell}$ consists of first counting boxes containing an $\ell$ in rows of length $k$ followed by counting boxes containing a $k$ in rows of length $\ell$.
\begin{figure}
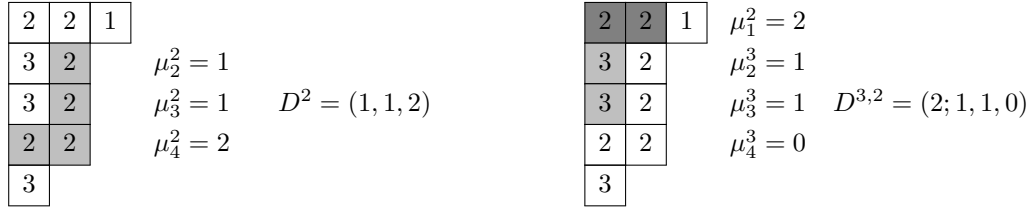

\centering
\ytableausetup{centertableaux}
\begin{ytableau}
2 & 2 & 1 \\
3 & *(lightgray) 2  & \none & \none & \none[\mu_2^2 = 1]\\
3 & *(lightgray) 2 & \none & \none & \none[\mu_3^2 = 1] & \none & \none & \none & \none[D^2=(1,1,2)]\\
*(lightgray) 2 & *(lightgray) 2 & \none & \none & \none[\mu_4^2 = 2]\\
3
\end{ytableau}
\hspace{1in}
\begin{ytableau}
*(gray) 2 & *(gray) 2 & 1 & \none & \none[\mu_1^2=2]\\
*(lightgray)3 &  2  & \none & \none & \none[\mu_2^3 = 1]\\
*(lightgray)3 &  2 & \none & \none & \none[\mu_3^3 = 1] & \none & \none & \none & \none[D^{3,2}=(2;1,1,0)]\\
2 & 2 & \none & \none & \none[\mu_4^3 = 0]\\
3
\end{ytableau}
\caption{Computing $D^k$ and $D^{k,\ell}$ from a tableau.}
\label{fig: subtableau}
\end{figure}

We can now give our characterization of neighborhood degree lists. A bipartitioned graph is a bipartite graph with a fixed partition of its vertex set into partite sets; we indicate the partition in the degree sequence of a bipartitioned graph by listing the degrees of all vertices in one partite set before beginning the other partite set, separating the two parts' degrees with a semicolon.

\begin{thm}
\label{thm: feasible tableau}
Let $T=((\tau^1_1,\dots,\tau^1_{d_1}),\dots,(\tau^n_1,\dots,\tau^n_{d_n}))$ be a feasible
tableau. The tableau $T$ is the NDL of a simple graph if and only if both the following hold:
\begin{enumerate}
\item[(a)] for each distinct term $k$ in $d$, the list $D^k$ is the degree sequence of a simple graph;
\item[(b)] for each pair $k,\ell$ of distinct values appearing in $d$, the list $D^{k,\ell}$ is the degree sequence of a bipartitioned graph.
\end{enumerate}
\end{thm}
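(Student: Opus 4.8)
The plan is to prove both directions by decomposing (or, conversely, assembling) a realization according to the degree classes of the endpoints of its edges. Fix the degree sequence $d=(d_1,\dots,d_n)$ read off from the component-list lengths, and for each value $k$ appearing in $d$ partition the vertices into the classes $V_k$. Every edge of a graph on this vertex set then joins two endpoints lying either in a common class $V_k$ or in two distinct classes $V_k,V_\ell$; accordingly the edge set splits into the \emph{intraclass} subgraphs $G[V_k]$ and the \emph{interclass} bipartite subgraphs $B_{k,\ell}$ consisting of all edges joining $V_k$ to $V_\ell$. The quantities $D^k$ and $D^{k,\ell}$ are exactly the (bipartitioned) degree sequences of these pieces, which is what makes the statement plausible.

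For necessity, suppose $T=\tau(G)$ and label $V(G)$ so that vertex $i$ carries component list $i$; then $\deg_G(i)=d_i$ and $i$ has exactly $\mu^\ell_i$ neighbors of degree $\ell$. In $G[V_k]$ the degree of a vertex $i\in V_k$ is its number of degree-$k$ neighbors, namely $\mu^k_i$, so $\deg(G[V_k])=D^k$ and (a) holds. Likewise, in $B_{k,\ell}$ a vertex $i\in V_k$ has degree $\mu^\ell_i$ while a vertex $j\in V_\ell$ has degree $\mu^k_j$, so the bipartitioned degree sequence of $B_{k,\ell}$ is $D^{k,\ell}$ and (b) holds.

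For sufficiency I would reverse this: assuming (a) and (b), use the hypotheses to choose, on the common labeled vertex set $\{1,\dots,n\}$, a simple graph $H_k$ on $V_k$ realizing $D^k$ (so $\deg_{H_k}(i)=\mu^k_i$) and a bipartite graph $B_{k,\ell}$ between $V_k$ and $V_\ell$ realizing $D^{k,\ell}$ (so $\deg_{B_{k,\ell}}(i)=\mu^\ell_i$ for $i\in V_k$ and $\deg_{B_{k,\ell}}(j)=\mu^k_j$ for $j\in V_\ell$), and let $G$ be the union of all these pieces. Each unordered vertex pair lies in exactly one piece---inside a single $H_k$ when both endpoints share a degree class, inside a single $B_{k,\ell}$ otherwise---so the pieces are edge-disjoint and $G$ is simple. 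Summing the contributions to the degree of a vertex $i\in V_k$ gives $\deg_G(i)=\mu^k_i+\sum_{\ell\ne k}\mu^\ell_i=\sum_\ell\mu^\ell_i=d_i$, the length of component list $i$. Consequently every vertex attains its prescribed degree, so a neighbor lying in $V_\ell$ genuinely has degree $\ell$ in $G$, and the multiset of neighbor-degrees of $i$, in which each value $\ell$ occurs $\mu^\ell_i$ times, matches component list $i$. Hence $\tau(G)=T$.

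The delicate point, and the one I would treat most carefully, is the degree bookkeeping in the sufficiency direction: the construction works only because the separate pieces interlock to produce exactly the degree $d_i$ at each vertex, which is in turn what guarantees that the neighbors recorded in component list $i$ really do have the degrees the tableau claims. Two facts underlie this. First, feasibility ensures every value occurring in a component list equals some $d_j$, so no neighbor-degree falls outside the classes being assembled. Second, the interclass pieces can be built at all: the equality $\sum_{i\in V_k}\mu^\ell_i=\sum_{j\in V_\ell}\mu^k_j$ of edge counts from the two sides is a necessary condition for $D^{k,\ell}$ to be bigraphic and is therefore already subsumed by hypothesis (b), so I need not verify it separately. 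Everything else reduces to the standard existence of realizations for graphic and bigraphic sequences, which (a) and (b) supply.
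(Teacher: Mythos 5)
Your proposal is correct and follows essentially the same route as the paper's proof: decompose a realization into the intraclass subgraphs $G[V_k]$ and the interclass bipartite subgraphs to get necessity, and for sufficiency assemble edge-disjoint realizations of the $D^k$ and $D^{k,\ell}$ on the labeled vertex set and check that the degrees sum to $d_i$. The only difference is that you spell out the degree bookkeeping that the paper dismisses as ``a simple matter to verify,'' which is a reasonable elaboration rather than a new idea.
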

\begin{proof}
Suppose first that both conditions (a) and (b) hold for $T$. Let $V$ be the vertex set $\{1,\dots,n\}$, and for each integer $k$ appearing in $d$, let $G_k$ be graph with vertex set $V$ in which all vertices in $V - V_k$ have degree 0 and the induced subgraph with vertex set $V_k$ is a realization of $D^k$ in which each vertex $i \in V_k$ has degree $\mu^k_i$.

Likewise, for each pair $k,\ell$ of distinct integers appearing in $d$, with $k > \ell$, let $G_{k,\ell}$ be a graph with vertex set $V$ in which all vertices in $V-V_k-V_\ell$ have degree 0 and the induced subgraph with vertex set $V_k \cup V_\ell$ is a realization of $D^{k,\ell}$ by a bipartite graph with partite sets $V_k$ and $V_\ell$, where each vertex $i \in V_k$ has degree $\mu^\ell_i$ and each vertex $j \in V_\ell$ has degree $\mu^k_j$.

We may construct a graph $G$ whose NDL is $T$ by taking letting $G$ be the graph with vertex set $\{1,\dots,n\}$ and edge set formed from the union of the edge sets of all graphs $G_k$ and $G_{k,\ell}$ for allowed values of $k,\ell$ from $d$. It is a simple matter to verify that each vertex has a neighborhood degree list that matches the corresponding component list in the tableau $T$.

Conversely, suppose $T$ is the neighborhood degree list of a simple graph $G$ with degree sequence $d=(d_1,\dots,d_n)$. With all notation as above, we may assume that the vertex set of $G$ is $V=\{1,\dots,n\}$ and that for all $i \in V$ vertex $i$ has degree $d_i$ in $G$. Note that for each integer $k$ appearing in $d$ the set $V_k$ then consists of the vertices of $G$ having degree $k$, and each induced subgraph $G[V_k]$ has degree sequence $D^k$. Furthermore, for any distinct integers $k,\ell$ appearing in $d$, the edges of $G$ joining vertices of degree $k$ to vertices of degree $\ell$ form the edge set of a bipartite graph on $V$ having degree sequence $D^{k,\ell}$ in which the sets $V_k$ and $V_\ell$ are independent sets. Thus $D^k$ and $D^{k,\ell}$ are graphic as claimed for all allowed $k,\ell$.
\end{proof}

A number of criteria are known for testing conditions (a) and (b) in Theorem~\ref{thm: feasible tableau}. For part (a), Section 3.1 of the book~\cite{MahadevPeled95} gives nine equivalent criteria for testing whether a list of nonnegative integers is a degree sequence. We mention here two such criteria. The first is the well known result due to Erd\H{o}s and Gallai~\cite{ErdosGallai60} (with a simplification due to Hammer, Ibaraki, and Simeone~\cite{HammerIbarakiSimeone78,HammerIbarakiSimeone81}).

\begin{thm}[\cite{ErdosGallai60,HammerIbarakiSimeone78,HammerIbarakiSimeone81}]
If $d=(d_1,\dots,d_n)$ is a list of nonnegative integers, listed in nonincreasing order, with an even sum, and $m(d) = \max\{i:d_i \geq i-1\}$, then $d$ is the degree sequence of a simple graph if and only if \[\sum_{i = 1}^k d_i \leq k(k-1) + \sum_{i=k+1}^n \min\{k,d_i\}\] for all $k \in \{1,\dots,m(d)\}$.
\end{thm}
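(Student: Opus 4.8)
The plan is to prove the two implications separately, first reducing the stated criterion (inequalities for $k \le m(d)$ only) to the classical Erdős--Gallai inequalities for all $k$. For the necessity direction I would use a double-counting argument. Suppose $d$ is realized by a simple graph $G$ whose vertex of degree $d_i$ is labeled $i$, and fix $k$; since $d$ is nonincreasing, let $S=\{1,\dots,k\}$ be a set of $k$ vertices of largest degree. Then $\sum_{i=1}^k d_i$ counts edges inside $S$ twice and edges from $S$ to $V(G)\setminus S$ once, so $\sum_{i=1}^k d_i = 2\,e(G[S]) + e(S, V(G)\setminus S)$. Because $G$ is simple, $e(G[S]) \le \binom{k}{2}$, contributing at most $k(k-1)$; and each vertex $i>k$ sends at most $\min\{k,d_i\}$ edges into $S$, since it has at most $k$ neighbors in $S$ and at most $d_i$ neighbors in total. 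Summing these bounds yields the inequality for every $k$, in particular for each $k \le m(d)$. The only content here is recognizing the two bounds $k(k-1)$ and $\min\{k,d_i\}$.

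Before proving sufficiency I would verify that the inequalities for $k \le m(d)$ force those for all larger $k$, so that it suffices to show the full Erdős--Gallai criterion implies $d$ is graphic. Writing $f(k)$ for the right-hand side minus the left-hand side, a short computation gives $f(k)-f(k-1) = 2(k-1) + p_k - \min\{k-1,d_k\} - d_k$, where $p_k = |\{i>k : d_i \ge k\}|$. For $k > m(d)$ one has $d_k \le k-2$, hence $\min\{k-1,d_k\}=d_k$ and $f(k)-f(k-1) \ge 2(k-1)-2(k-2) = 2 > 0$. Thus $f$ strictly increases past $m(d)$, giving $f(k) > f(m(d)) \ge 0$ for all $k > m(d)$; this is exactly the simplification of Hammer, Ibaraki, and Simeone.

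It then remains to show that if $d$ has even sum and satisfies $\sum_{i=1}^k d_i \le k(k-1) + \sum_{i>k}\min\{k,d_i\}$ for \emph{all} $k$, then $d$ is graphic. I would argue by induction on $\sum_i d_i$ via the Havel--Hakimi reduction: if $d_1 = 0$ the sequence is all zeros and is realized by an edgeless graph; otherwise form $d'$ by deleting $d_1$, subtracting $1$ from each of the next $d_1$ largest terms, and re-sorting into nonincreasing order. The sum drops by $2d_1$ and so stays even, and the Havel--Hakimi theorem guarantees that any realization of $d'$ extends to one of $d$ by re-introducing the deleted vertex, joined to the $d_1$ decremented vertices.

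The main obstacle, which I expect to be the technical heart of the argument, is verifying that $d'$ again satisfies the Erdős--Gallai inequalities, so that the inductive hypothesis applies. Here I would compare, for each $k$, the partial sum $\sum_{i=1}^k d'_i$ against $\sum_{i=1}^k d_i$, carefully accounting both for the removed leading term and for the unit decrements, and check that the inequality survives in each of the cases determined by where the decremented entries and the cutoff index $k$ fall relative to one another (complicated slightly by the re-sorting). Once this case analysis is carried out the induction closes and the theorem follows. As a remark, one could instead bypass this bookkeeping by deducing sufficiency from the Gale--Ryser theorem or from a max-flow feasibility argument, at the cost of invoking heavier machinery.
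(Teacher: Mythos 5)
The paper does not prove this theorem; it is quoted from the literature (Erd\H{o}s--Gallai for the characterization, Hammer--Ibaraki--Simeone for the restriction to $k\le m(d)$), so there is no in-paper argument to compare against. Judged on its own terms, your necessity direction is correct and complete: the identity $\sum_{i=1}^k d_i = 2e(G[S])+e(S,V\setminus S)$ with the bounds $e(G[S])\le\binom{k}{2}$ and $e(\{i\},S)\le\min\{k,d_i\}$ is exactly the standard double count. Your reduction of the full family of inequalities to those with $k\le m(d)$ is also correct: the difference formula $f(k)-f(k-1)=2(k-1)+p_k-\min\{k-1,d_k\}-d_k$ checks out, and for $k>m(d)$ one indeed has $d_k\le k-2$, giving $f(k)-f(k-1)\ge 2$, so $f(k)\ge f(m(d))\ge 0$ propagates upward.

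The gap is in sufficiency. Your plan reduces everything to the claim that the Havel--Hakimi derived sequence $d'$ (delete $d_1$, decrement the next $d_1$ largest terms, re-sort) again satisfies the Erd\H{o}s--Gallai inequalities, and you explicitly defer that verification. But that claim is not a routine bookkeeping exercise --- it is the entire content of the sufficiency direction, and the case analysis (tracking how each partial sum and each $\min\{k,d_i\}$ term changes, across the positions of the decremented entries relative to $k$ and through the re-sort) is precisely where published proofs along this line spend all their effort. As written, the proposal proves only necessity and the $m(d)$ reduction; sufficiency rests on an unproved lemma. Two smaller points you should also make explicit if you pursue this route: the $k=1$ inequality is what guarantees the reduction is well defined (there are at least $d_1$ positive terms after the first, so no entry is decremented below $0$), and you only need the easy direction of Havel--Hakimi (a realization of $d'$ extends to one of $d$ by adding back the deleted vertex), not the 2-switch direction. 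Alternatively, the detour through Gale--Ryser or network flows that you mention, or Choudum-style induction on the sum (decrementing $d_1$ and $d_n$ rather than performing a full Havel--Hakimi step), would each avoid this particular lemma, but each would then need to be carried out in full.
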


Our second test of (a) was observed by Merris~\cite{Merris03} and brings the graphical nature of Young diagrams into play. Given the Young diagram of a partition $\pi$ of a positive integer, let $A(\pi)$ be the shape comprised of the boxes in the diagram whose column index is at least as large as the row index; that is, $A(\pi)$ consists of all boxes lying on or to the right of the main diagonal. Let $B(\pi)$ be the shape comprised of the remaining boxes in the Young diagram, those lying strictly below the main diagonal. Let $\alpha(\pi)$ be the partition whose parts are the lengths of the rows of $A(\pi)$, and let $\beta(\pi)$ be the partition whose parts are the lengths of the columns of $B(\pi)$. Taking $\pi$ to be the sequence $(3,2,2,2,1)$ from our earlier examples, we have $\alpha(\pi)=(3,1)$ and $\beta(\pi)=(4,2)$, as illustrated in Figure~\ref{fig: A and B tableau}.

\begin{figure}
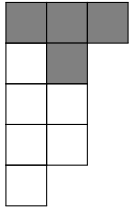

\centering
\begin{ytableau}
*(gray) & *(gray)  &*(gray) \\
&*(gray)\\
& \\
& \\
\\
\end{ytableau}
\caption{The Young diagram for $(3,2,2,2,1)$ where the boxes of $A(\pi)$ are shaded.}
\label{fig: A and B tableau}
\end{figure}
\begin{thm}[\cite{Merris03}]
Let $\pi$ be a list of nonnegative integers in nonincreasing order that has an even sum. The list $\pi$ is graphic if and only if $\alpha(\pi)$ and $\beta(\pi)$ satisfy $\sum_{i=1}^k \beta_i \geq \sum_{i=1}^k \alpha_i$ for all positive $k$ up through the shorter of the lengths of $\alpha$ and $\beta$, and $\sum_i \beta_i = \sum_i \alpha_i$.
\end{thm}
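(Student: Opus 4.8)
The plan is to show that, under the standing hypothesis that $\pi$ has even sum, the stated conditions on $\alpha(\pi)$ and $\beta(\pi)$ are merely a repackaging of the Erd\H{o}s--Gallai inequalities, so that the theorem follows from the Erd\H{o}s--Gallai criterion quoted above. The first step is to make the geometry of the diagram explicit. Writing $s$ for the side length of the Durfee square of $\pi$ (equivalently $s=\max\{i:\pi_i\ge i\}$) and $\pi^{*}$ for the conjugate partition (so that $\pi^{*}_j$ counts the parts of $\pi$ that are at least $j$), I would read off directly from the diagram that $\alpha_i=\pi_i-i+1$ and $\beta_i=\pi^{*}_i-i$ for $1\le i\le s$, with both sequences vanishing beyond index $s$. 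In particular $\alpha$ has length exactly $s$, while $\beta$ has length $t\le s$, where $t$ is the largest index with $\pi^{*}_t>t$; thus the shorter of the two lengths is $t$.

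Next I would dispose of the equality condition. Since $\sum_i\alpha_i$ counts the boxes of $A(\pi)$ and $\sum_i\beta_i$ counts the boxes of $B(\pi)$, and these two shapes partition the whole Young diagram, we have $\sum_i\alpha_i+\sum_i\beta_i=\sum_i\pi_i$. Hence $\sum_i\beta_i=\sum_i\alpha_i$ holds if and only if $\sum_i\pi_i$ is even; under the hypothesis of the theorem this equality is automatic, and it supplies the common total the inequalities will invoke. The heart of the argument is the partial-sum inequalities. Using the elementary identity $\sum_{i=1}^n\min\{k,\pi_i\}=\sum_{j=1}^k\pi^{*}_j$ together with the observation that $\min\{k,\pi_i\}=k$ for every $i\le k$ once $k\le s$ (because then $\pi_k\ge k$), I would rewrite the Erd\H{o}s--Gallai bound as
\[
k(k-1)+\sum_{i=k+1}^n\min\{k,\pi_i\}=\sum_{j=1}^k\pi^{*}_j-k
\qquad(1\le k\le s).
\]
Substituting the formulas for $\alpha_i$ and $\beta_i$ then shows that, for each $k$ with $1\le k\le s$, the Erd\H{o}s--Gallai inequality at $k$ is literally equivalent to $\sum_{i=1}^k\beta_i\ge\sum_{i=1}^k\alpha_i$.

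The main obstacle is reconciling the two different ranges of $k$: Erd\H{o}s--Gallai ranges over $1\le k\le m(\pi)$, whereas the stated criterion checks the inequality only up to $t$. I would close this gap in two moves. First, for $t<k\le s$ one has $\beta_k=0$, so $\sum_{i=1}^k\beta_i=\sum_i\beta_i=\sum_i\alpha_i\ge\sum_{i=1}^k\alpha_i$ automatically from the equality established above; thus the inequalities for $k\le t$, together with even sum, already force the inequalities—and hence the Erd\H{o}s--Gallai conditions—for all $k\le s$. Second, I would note that $m(\pi)\in\{s,s+1\}$, and that when $m(\pi)=s+1$ one has $\pi_{s+1}=s$, in which case the Erd\H{o}s--Gallai inequality at $k=s+1$ is identical to the one at $k=s$: using $\min\{s,\pi_i\}=\min\{s+1,\pi_i\}=\pi_i$ for $i\ge s+1$, both reduce to $\sum_{i=1}^s\pi_i\le s^2+\sum_{i=s+2}^n\pi_i$. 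With the redundant top condition removed and the missing conditions for $t<k\le s$ supplied for free, the repackaging is complete and the equivalence with the Erd\H{o}s--Gallai criterion, hence with graphicality, follows.
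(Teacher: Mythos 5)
The paper offers no proof of this theorem---it is quoted from Merris~\cite{Merris03}---so there is nothing to compare your argument against; I am judging it on its own. Your overall strategy (read off $\alpha_i=\pi_i-i+1$ and $\beta_i=\pi^*_i-i$ from the Durfee square, convert the Erd\H{o}s--Gallai bound via $\sum_i\min\{k,\pi_i\}=\sum_{j\le k}\pi^*_j$, and observe that the inequality at level $k\le s$ is literally $\sum_{i\le k}\alpha_i\le\sum_{i\le k}\beta_i$) is the standard and correct route, and your disposal of the index $k=s+1$ when $m(\pi)=s+1$ is also fine.

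There is, however, one genuine error: the claim that $\sum_i\beta_i=\sum_i\alpha_i$ ``holds if and only if $\sum_i\pi_i$ is even.'' From $\sum_i\alpha_i+\sum_i\beta_i=\sum_i\pi_i$ you only get that equality \emph{implies} an even total; the converse is false. The paper's own running example kills it: for $\pi=(3,2,2,2,1)$ one has $\alpha=(3,1)$ and $\beta=(4,2)$, so the sum is even ($10$) yet $\sum\alpha=4\ne 6=\sum\beta$; likewise $\pi=(1,1,1,1)$ gives $\alpha=(1)$, $\beta=(3)$. Since both of these sequences are graphic, this error also exposes that the theorem \emph{as printed} cannot be right: its ``only if'' direction fails for $(3,2,2,2,1)$. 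The correct form of Merris's criterion replaces the equality $\sum_i\beta_i=\sum_i\alpha_i$ by the inequality $\sum_i\beta_i\ge\sum_i\alpha_i$ (equivalently, one runs the partial-sum inequalities for all $k$ up to the length $s$ of $\alpha$, padding $\beta$ with zeros; note that $\pi^*_j-j\ge 0$ for all $j\le s$, so your formula for $\beta$ already does this padding correctly). With that correction your treatment of the range $t<k\le s$ goes through verbatim---$\sum_{i\le k}\beta_i=\sum_i\beta_i\ge\sum_i\alpha_i\ge\sum_{i\le k}\alpha_i$---and the rest of your reduction is sound. So the fix is to delete the false ``equality is automatic'' step, keep the equality (now inequality) clause as a genuine hypothesis, and flag the typographical error in the statement itself.
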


For testing part (b) of Theorem~\ref{thm: feasible tableau}, we may use a criterion due to Gale~\cite{Gale57} and Ryser~\cite{Ryser57}.

\begin{thm}[\cite{Gale57,Ryser57}]
Let $\pi=(\pi_1,\dots,\pi_p)$ and $y=(\rho_1,\dots,\rho_q)$ be lists of nonnegative integers such that the terms of $\pi$ are indexed in nonincreasing order. There is a bipartite simple graph $H$ such that $\pi$ and $\rho$ are the lists of the degrees of vertices in the respective partite sets of $H$ if and only if $\sum_i \pi_i = \sum_i \rho_i$ and for each $k$ such that $1 \leq k \leq p$, \[\sum_{i=1}^k \pi_i \leq \sum_i \min\{k,\rho_i\}.\]
\end{thm}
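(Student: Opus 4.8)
The plan is to prove the two directions separately: necessity by a direct edge-counting argument, and sufficiency by an explicit construction. Before doing either, I would record the observation that the right-hand side $\sum_i \min\{k,\rho_i\}$ is a partial sum of the conjugate of $\rho$. Writing $\rho^*_j = |\{i : \rho_i \ge j\}|$ for the conjugate partition and swapping the order of summation gives $\sum_i \min\{k,\rho_i\} = \sum_{j=1}^k \rho^*_j$. Thus, together with the equal-sum hypothesis, the family of inequalities says precisely that $\rho^*$ majorizes $\pi$, i.e. $\sum_{i=1}^k \pi_i \le \sum_{j=1}^k \rho^*_j$ for all $k$, with equality at $k=p$. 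This reformulation recasts the problem as a statement in the dominance order, and it is thematically close to the conjugate-partition criterion of Merris recorded just above; the construction will exploit it.

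For necessity, suppose $H$ exists with partite sets realizing $\pi$ and $\rho$. Summing degrees on each side counts the edges of $H$, so $\sum_i \pi_i = \sum_i \rho_i$. For the inequalities, fix $k$ and let $u_1,\dots,u_k$ be the vertices on the $\pi$-side of degrees $\pi_1,\dots,\pi_k$. The edges incident to these vertices number exactly $\sum_{i=1}^k \pi_i$, and each has its other endpoint on the $\rho$-side. A vertex of degree $\rho_j$ on the $\rho$-side is incident to at most $\min\{k,\rho_j\}$ of them --- at most $k$ because only the $k$ vertices $u_i$ are available, and at most $\rho_j$ because that is its degree. Summing over the $\rho$-side gives $\sum_{i=1}^k \pi_i \le \sum_j \min\{k,\rho_j\}$, as required.

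For sufficiency I would build the $\{0,1\}$ bipartite adjacency matrix $M$, with rows indexed by the $\pi$-side and columns by the $\rho$-side, whose row sums are $\pi$ and column sums $\rho$. I would begin at the extreme case of the reformulation: the staircase matrix whose column $j$ has $1$'s in its top $\rho_j$ entries realizes column sums $\rho$ and, by the conjugate identity, row sums $\rho^*$. To move the row sums from $\rho^*$ down to the target $\pi$, I would repeatedly apply \emph{within-column transfers}: choose a column having a $1$ in a row whose current sum is still too large and a $0$ in a row whose current sum is still too small, and swap those two entries. Such a move preserves every column sum and keeps all entries in $\{0,1\}$, while decreasing one row sum by $1$ and increasing another by $1$; moreover a column witnessing such a swap always exists by a short pigeonhole count whenever the donor row's sum strictly exceeds the recipient row's sum. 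Iterating until the row-sum vector equals $\pi$ produces $M$, hence the graph $H$.

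The crux --- and the step I expect to be the main obstacle --- is the majorization transfer lemma driving this iteration: whenever the current row-sum vector $r$ strictly majorizes $\pi$ with the same total, one can select the donor and recipient rows so that the resulting vector still majorizes $\pi$, moves strictly closer to it, and maintains the ordering $r_a > r_b$ that realizability requires. Making these choices consistently, and checking that the process halts at exactly $\pi$ rather than overshooting, is the delicate bookkeeping. An alternative route that sidesteps the transfer lemma is a direct induction on $p$: attach the vertex of degree $\pi_1$ to the $\pi_1$ vertices of largest degree on the $\rho$-side, delete it, decrement those degrees, and verify that the reduced pair again satisfies the Gale--Ryser inequalities. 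There the obstacle merely shifts to showing that this greedy ``largest columns first'' reduction preserves the inequalities, which is essentially the same combinatorial core viewed from the row side.
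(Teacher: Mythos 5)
The paper does not actually prove this theorem: it is quoted from Gale and Ryser as a known criterion, used as a black box for testing condition (b) of Theorem~2.1, so there is no proof of record to compare against. Judged on its own, your necessity argument is complete and correct, and your sufficiency argument is the standard Ryser staircase-and-transfer construction --- the right route. The issue is that, as written, the sufficiency half is a plan rather than a proof: you place the entire weight on the ``majorization transfer lemma'' and then defer it as delicate bookkeeping. That lemma \emph{is} the content of the direction, so it must be carried out; fortunately it is short. Keep the rows in their fixed order matched to $\pi$ (sorted nonincreasingly) and maintain the invariant $\sum_{i\le k} r_i \ge \sum_{i\le k}\pi_i$ for all $k$, with equal totals. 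If $r\ne\pi$, let $a$ be the least index with $r_a\ne\pi_a$ (then $r_a>\pi_a$ by the invariant at $k=a$) and let $b$ be the least index greater than $a$ with $r_b<\pi_b$ (it exists because the totals agree). Then $r_a>\pi_a\ge\pi_b>r_b$, so your pigeonhole swap is available; and the transferred vector still satisfies the invariant because for $a\le k<b$ one has $\sum_{i\le k}r_i\ge\sum_{i\le k}\pi_i+(r_a-\pi_a)>\sum_{i\le k}\pi_i$ (every $r_i$ with $a<i<b$ is at least $\pi_i$ by the minimality of $b$), while the partial sums for $k<a$ and $k\ge b$ are unchanged. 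Each transfer decreases $\sum_k|r_k-\pi_k|$ by $2$, so the process halts at exactly $\pi$; note the argument nowhere needs $r$ to stay sorted. One further small point: the staircase matrix requires $\rho_j\le p$ for every $j$ so that each column fits in $p$ rows. This is not an extra hypothesis --- the inequality at $k=p$ together with $\sum_i\pi_i=\sum_i\rho_i$ gives $\sum_i\rho_i\le\sum_i\min\{p,\rho_i\}$, which forces $\rho_i\le p$ for all $i$. With these two items supplied, your argument is a complete and correct proof; your alternative greedy induction would also work and is essentially Lemma~3.2 of the paper specialized to the bipartite setting.
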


%%%%%%%%%%%%%%%%%%%%%%%%%%%%%%%%%%%%%%%%%
% Section 3                             %
%%%%%%%%%%%%%%%%%%%%%%%%%%%%%%%%%%%%%%%%%
\section{N-switches}
\label{sec: N-switches}
A key operation on realizations of graph degree sequences is the $2$-switch (also referred to as a swap or transfer), which we now define. An \emph{alternating $4$-cycle} in a graph is a configuration consisting of four vertices $a,b,c,d$ such that $ac$ and $bd$ are edges and $ad$ and $bc$ are not edges in the graph. A \emph{2-switch} on this alternating $4$-cycle is the operation of deleting edges $ac$ and $bd$ from the graph and adding edges $ad$ and $bc$; we denote this 2-switch by $\{ac,bd\} \rightrightarrows \{ad,bc\}$. For example, in Figure~\ref{fig: 2switch} we see an alternating $4$-cycle in the graph on the left (dotted lines denote the non-adjacencies of the alternating $4$-cycle), and in the graph on the right we see the graph resulting after a $2$-switch is performed.

\begin{figure}
\centering
\includegraphics[width=6cm]{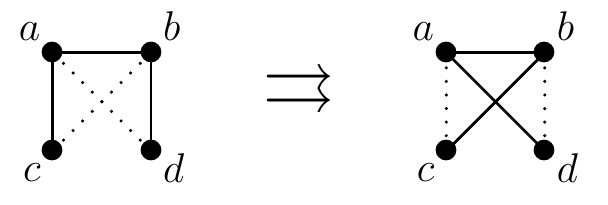}
\caption{Two realizations of $(2,2,1,1)$, along with a 2-switch between them.}
\label{fig: 2switch}
\end{figure}

Note that the two graphs illustrated in Figure~\ref{fig: 2switch} are both realizations of $(2,2,1,1)$. In general, a 2-switch preserves the degree sequence of the graph it is performed on, and in fact the following is true.

\begin{thm}[\cite{FulkersonEtAl65}; see also \cite{Petersen1891}] \label{thm: 2-switches}
In two graphs $G$ and $H$ with the same vertex set, every vertex has the same degree in both $G$ and $H$ if and only if $G$ may be transformed into $H$ via a sequence of 2-switches.
\end{thm}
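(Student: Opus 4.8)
The plan is to prove both implications, with the forward (degree-preservation) direction being routine and the reverse (reachability) direction carrying all the difficulty. For the easy direction, I would observe that a single 2-switch $\{ac,bd\}\rightrightarrows\{ad,bc\}$ leaves every degree unchanged: each of $a,b,c,d$ trades one incident edge for another, and no other vertex is touched. Hence any sequence of 2-switches preserves the degree of every vertex, so if $G$ is transformed into $H$ in this way then $G$ and $H$ must agree in every vertex degree.

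For the other direction, assume $G$ and $H$ have the same vertex set $V$ and $\deg_G(v)=\deg_H(v)$ for all $v$. I would induct on the size of the symmetric difference $|E(G)\triangle E(H)|$. Because the pointwise degrees agree we have $|E(G)|=|E(H)|$, so the sets $E(G)\setminus E(H)$ and $E(H)\setminus E(G)$ have the same cardinality, and the symmetric difference is empty exactly when $G=H$. Color each edge of $E(G)\setminus E(H)$ red and each edge of $E(H)\setminus E(G)$ blue. The key structural observation is that at every vertex $v$ the number of incident red edges equals the number of incident blue edges: subtracting the common neighbors $|N_G(v)\cap N_H(v)|$ from the equal totals $\deg_G(v)=\deg_H(v)$ leaves the red count equal to the blue count. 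Consequently the symmetric difference decomposes into closed trails that alternate between red and blue edges.

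Given this alternating structure, the inductive step is to locate a 2-switch on $G$ that strictly decreases $|E(G)\triangle E(H)|$. Fix one alternating cycle $v_0v_1\cdots v_{2k-1}v_0$ in which the edges $v_{2i}v_{2i+1}$ are red (present in $G$, absent from $H$) and the edges $v_{2i+1}v_{2i+2}$ are blue. When $k=1$ this cycle is literally an alternating $4$-cycle of $G$: the two red edges are present and the two blue edges are absent, so the single 2-switch that deletes the red pair and inserts the blue pair makes $G$ agree with $H$ on all four edges and removes four edges from the symmetric difference. When $k>1$, I would work with a red-blue-red window $v_0v_1$, $v_1v_2$, $v_2v_3$ and perform the 2-switch $\{v_0v_1,v_2v_3\}\rightrightarrows\{v_1v_2,v_0v_3\}$, which installs the blue edge $v_1v_2$ (matching $H$), deletes the two red edges, and shortens the alternating cycle by two.

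The main obstacle is precisely the legality of this shortening switch: it requires $v_0v_3$ to be a nonedge of $G$, and when the chord $v_0v_3$ is already present no single window move of this form is available. I expect the crux of the write-up to be showing that progress can always be made nonetheless -- either by selecting a window of the cycle at which the offending chord is absent, or by first performing an auxiliary 2-switch that creates such a configuration without increasing the symmetric difference. Once a strictly decreasing switch is guaranteed, induction closes the argument. Alternatively, one can phrase the whole reverse direction as an induction on $|V|$, repairing the neighborhood of a single fixed vertex $v$ (trading each edge $vy$ with $y\notin N_H(v)$ for a missing edge $vx$ with $x\in N_H(v)$) until $N_G(v)=N_H(v)$, then deleting $v$ and applying the inductive hypothesis to $G-v$ and $H-v$; there the same chord-type obstruction reappears as the one case to handle with care.
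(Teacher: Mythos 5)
The paper does not actually prove this theorem---it is quoted from Fulkerson--Hoffman--McAndrew and Petersen---so there is no internal proof to compare against; judging your argument on its own terms, the forward direction and the setup for the converse (equal red and blue degree at every vertex of $E(G)\,\triangle\,E(H)$, hence a decomposition into alternating closed trails) are correct and standard. But the step you defer---legality of the shortening switch when the chord $v_0v_3$ lies in $E(G)$---is not a technicality to be patched later; it is the entire content of the theorem, and neither of your two proposed repairs works as stated. In particular, ``select a window of the cycle at which the offending chord is absent'' can fail at \emph{every} window: take $V=\{v_0,\dots,v_5\}$, let $G$ be the $6$-cycle $v_0v_1v_4v_5v_2v_3v_0$ and $H$ the $6$-cycle $v_0v_5v_2v_1v_4v_3v_0$. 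Both are $2$-regular, the symmetric difference is the alternating $6$-cycle $v_0v_1\cdots v_5v_0$, and all three window chords $v_0v_3$, $v_2v_5$, $v_4v_1$ lie in $E(G)\cap E(H)$, so no switch of your prescribed form is available in $G$. The standard ways to close this gap are: (i) take a \emph{shortest} alternating closed walk, note that a chord in $E(G)\setminus E(H)$ would yield a shorter one, and in the remaining case $v_0v_3\in E(G)\cap E(H)$ observe that the $4$-cycle $v_0v_1v_2v_3$ is then an alternating $4$-cycle of $H$, so one performs the switch on the $H$ side (legitimate because switches are reversible, so the sequence may be built from both ends); or (ii) abandon the symmetric-difference route entirely in favor of a canonical-form argument.

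Your closing ``alternative'' gestures at route (ii) but also has a gap: repairing $N_G(v)$ directly to $N_H(v)$ requires, for $y\in N_G(v)\setminus N_H(v)$ and $x\in N_H(v)\setminus N_G(v)$, a vertex $z$ with $xz\in E(G)$ and $yz\notin E(G)$, and no such $z$ need exist when $\deg(x)<\deg(y)$. The degree-counting argument only produces the required $z$ when the target neighborhood consists of the \emph{highest-degree} available vertices---this is exactly the paper's Lemma~\ref{lem: maxdegneighborhood} (Kleitman--Wang). The robust version of (ii), and the strategy the paper itself uses to prove its N-switch analogue (Theorem~\ref{thm: Nswitch sequence}), is to route \emph{both} $G$ and $H$ through a common canonical realization: fix a vertex, switch each graph until that vertex's neighborhood is its principal neighborhood, delete it, and recurse; then concatenate one sequence with the reversal of the other. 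I would recommend rewriting the converse along those lines, or completing the symmetric-difference argument with the explicit case analysis in (i).
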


Observe that an arbitrary 2-switch performed on a graph $G$ may result in a graph having a different neighborhood degree list than that of $G$. For example, if in Figure~\ref{fig: 2switch} an additional vertex $e$ was made adjacent to vertices $b$ and $d$ in both graphs, the two graphs would both have degree sequence $(3,2,2,2,1)$, but the one on the left would have NDL $((2,2,2),(3,2),(3,2),(3,1),(2))$ while the one on the right would have the NDL $((2,2,1),(3,2),(3,2),(2,2),(3))$ referred to earlier. The joint degree matrices of these two graphs would also differ (notice that the first graph has no edge between vertices with degrees 1 and 3, respectively, while the latter does). In studying graphs with a common joint degree matrix, both Stanton and Pinar~\cite{StantonPinar12} and Czabarka et al.~\cite{CDEM15} addressed what the latter paper called a \emph{restricted swap operation}, which amounts to a 2-switch $\{ac,bd\} \rightrightarrows \{ad,bc\}$ for which $a$ and $b$ have the same degree. The paper~\cite{CDEM15} completes the proof that two graphs have the same joint degree matrix if and only if they may be transformed into each other via finite sequences of restricted swap operations.

In the remainder of this section we present an analoguous switching operation for dealing with neighborhood degree lists. Define an \emph{N-switch} to be a 2-switch $\{ac,bd\} \rightrightarrows \{ad,bc\}$ such that $\deg(a)=\deg(b)$ and $\deg(c)=\deg(d)$. We now describe N-switch analogues of the previous 2-switch results.

In the following, let $\NDL(G)$ denote the neighborhood degree list of a graph $G$, and for any vertex $v$ of $G$, let $\NDL_G(v)$ (or $\NDL(v)$, if $G$ is clear from the context) denote the list of degrees that the neighbors of $v$ have in $G$ (typically ordered from largest to smallest).

First, observe that any N-switch on $G$ leaves $\NDL(v)$ unchanged for each vertex $v$ of $G$. Indeed, an N-switch is a 2-switch, so the degree of each vertex in $G$ remains the same during the operation. Furthermore, the only vertices of $G$ whose neighborhoods are changed by an N-switch are $a,b,c,d$, and for each of these vertices, one neighbor is replaced by another vertex having the same degree.

We now show that two labeled graphs have the same neighborhood degree list if and only if we can transform one into the other via a finite sequence of N-switches. We achieve this by showing that both graphs can be transformed by a sequence of N-switches to a ``canonical'' realization of the neighborhood degree list; we can then transform the first graph into the canonical realization and then reverse the other graph's sequence of N-switches, so as to transform the canonical realization into the second graph. (Note that the operation of undoing an N-switch is itself an N-switch.)

We preface our definitions and theorem with a result of Kleitman and Wang~\cite{KleitmanWang73}, stated and proved here in a slightly more general setting.

\begin{lem}[\cite{KleitmanWang73}]\label{lem: maxdegneighborhood}
Let $G$ be a graph, let $v$ be an arbitrary vertex of $G$, and let $T$ be a set of vertices in $G$ not containing $v$. Suppose that $v$ has $p$ neighbors in $T$ in the graph $G$. For any set $S$ of $p$ vertices of $T$ having the highest degrees in $G$, there exists a realization $G'$ of $\deg(G)$ in which the neighborhood of $v$, restricted to $T$, is $S$, and all neighbors of $v$ outside of $T$ are the same as they are in $G$.
\end{lem}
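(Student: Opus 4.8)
The plan is to prove the lemma by a greedy sequence of 2-switches, each of which exchanges one ``wrong'' neighbor of $v$ inside $T$ for a higher-degree ``right'' one, while leaving every degree and every neighbor of $v$ outside $T$ untouched. Formally, I would induct on the quantity $r=|S\setminus N(v)|$, the number of vertices of $S$ not yet adjacent to $v$. If $r=0$, then $S\subseteq N(v)\cap T$; since $|S|=p=|N(v)\cap T|$ by hypothesis, this forces $S=N(v)\cap T$, and we may take $G'=G$.

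For the inductive step, suppose $r\geq 1$. Because $N(v)\cap T$ and $S$ are two distinct $p$-element subsets of $T$, I can choose vertices $x\in S\setminus N(v)$ and $y\in (N(v)\cap T)\setminus S$. Since $x\in S$ while $y\in T\setminus S$, the defining property of $S$ as a set of $p$ vertices of $T$ of highest degree gives $\deg(x)\geq\deg(y)$. The crux is to produce a vertex $w$ adjacent to $x$ but not to $y$ and distinct from $v,x,y$. Given such a $w$, the 2-switch $\{vy,wx\}\rightrightarrows\{vx,wy\}$ is legitimate (the four vertices are distinct, $vy$ and $wx$ are edges, and $vx$ and $wy$ are non-edges), it preserves every vertex degree, and it alters the neighborhood of $v$ only by trading $y$ for $x$. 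As both $x$ and $y$ lie in $T$, the neighbors of $v$ outside $T$ are preserved, the count $|N(v)\cap T|$ stays equal to $p$, and $r$ decreases by one, so the induction applies to the resulting graph.

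The main obstacle, and the only point at which the degree inequality is used, is establishing the existence of $w$. I would argue by a short count of the neighbors of $x$ and $y$ among the vertices other than $x$ and $y$: writing $c$ for the number of their common neighbors, $a$ for the number adjacent to $x$ alone, and $b$ for the number adjacent to $y$ alone, we have $\deg(x)=c+a+\varepsilon$ and $\deg(y)=c+b+\varepsilon$, where $\varepsilon\in\{0,1\}$ records whether $xy$ is an edge. Hence $\deg(x)\geq\deg(y)$ yields $a\geq b$. Now $v$ itself is adjacent to $y$ but not to $x$, and $v\neq x,y$ because $v\notin T$ while $x,y\in T$; thus $v$ is counted in $b$, giving $b\geq 1$ and therefore $a\geq 1$. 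Any vertex $w$ realizing this $a$ is adjacent to $x$ but not $y$, hence automatically distinct from $x$ and $y$, and distinct from $v$ as well (since $w$ and $v$ disagree on adjacency to $x$), which is exactly what the switch requires. Composing the switches obtained across the induction produces the desired realization $G'$, because each individual switch fixes all degrees and leaves every neighbor of $v$ outside $T$ unchanged.
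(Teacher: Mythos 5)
Your proof is correct and takes essentially the same approach as the paper's: a greedy sequence of 2-switches, each trading a neighbor of $v$ in $T$ outside $S$ for a higher-degree non-neighbor in $S$, using the degree inequality to guarantee the fourth vertex of the alternating $4$-cycle. Your explicit neighbor-counting argument for the existence of the swap partner $w$ just fills in a step the paper asserts in a single sentence.
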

\begin{proof}
Given $v$, $T$, and $S$ as in the hypothesis, suppose that $v$ is not adjacent to some vertex $u$ of $S$. Then $v$ must have some neighbor $w$ that is in $T-S$. By the definition of $S$, we know that $\deg(u) \geq \deg(w)$, and since $v$ is a neighbor of $w$ that $u$ does not have, $u$ must have some neighbor $x$ that $w$ is not adjacent to. The 2-switch $\{ux, wv\} \rightrightarrows \{uv,wx\}$ leaves $v$ replaces $w$ by $u$ in the neighborhood of $v$ and leaves unchanged all other adjacency relationships involving $v$. Repeating the argument with other 2-switches, we may arrive at a graph in which the neighborhood of $v$, restricted to $T$, is $S$.
\end{proof}

\begin{dfn}
Given a graph $H$ having vertex set $\{v_1,\dots, v_n\}$, we order the vertices so that vertex $v_i$ precedes $v_j$ in the list if and only if either $\deg(v_i)>\deg(v_j)$, or $\deg(v_i)=\deg(v_j)$ and $i<j$. Given any vertex $v_k$ of $H$, the \emph{principal neighborhood of $v_k$} is the set of $\deg(v_k)$ vertices that appear first in the ordering after $v_k$ is removed. Note that the principal neighborhood of any vertex $v_k$ is uniquely determined, and by Lemma~\ref{lem: maxdegneighborhood} (with $T = \{v_1,\dots,v_n\} - \{v_k\}$), there is a realization of $\deg(H)$ in which the neighborhood of $v_k$ is its principal neighborhood.

Given a degree sequence $\pi=(\pi_1,\dots,\pi_n)$, we define the \emph{canonical realization} $R(\pi)$ of $\pi$ to be the unique graph with vertex set $V=\{v_1,\dots,v_n\}$ having the property that for each $i \in \{1,\dots,n\}$, the neighborhood of $v_i$ in $R(\pi)-\{v_1,\dots,v_{i-1}\}$ is its principal neighborhood in that graph.
\end{dfn}

Observe that we may transform any realization of $\pi$ into $R(\pi)$ by iteratively performing 2-switches as in Lemma~\ref{lem: maxdegneighborhood}, beginning with $v=v_1$ and $T=V-\{v_1\}$, then $v=v_2$ and $T=V-\{v_1,v_2\}$, and so on.

We now make the analogous definitions and observations for bipartite graphs and degree sequences.

\begin{dfn}
Given a bipartite graph $J$ having partite sets $X=\{x_1,\dots,x_p\}$ and $Y=\{y_1,\dots,y_q\}$, suppose that we additionally name the vertices of $Y$ as $\{w_1,\dots,w_q\}$ so that given any two vertices $w_i$ and $w_j$, where $w_i=y_s$ and $w_j=y_t$, the vertex $w_i$ precedes $w_j$ in the reordered list if and only if either $\deg_J(y_s)>\deg_J(y_t)$, or $\deg_J(y_s)=\deg_J(y_t)$ and $s<t$. Given any vertex $x_k$ of $X$, the \emph{principal neighborhood of $x_k$} is the set $\{w_1,\dots,w_{\deg(x_k)}\}$. As before, the principal neighborhood of any vertex in $X$ is uniquely determined, and by Lemma~\ref{lem: maxdegneighborhood} (with $T = Y$), there is a realization of $\deg(H)$ in which the neighborhood of $x_k$ is its principal neighborhood. Furthermore, the 2-switches described in the proof of Lemma~\ref{lem: maxdegneighborhood}, as performed in the bipartite graph $J$, are along alternating $4$-cycles whose edges and non-edges all contain a vertex from $X$ and a vertex from $Y$; thus the realizations obtained via these 2-switches are all bipartite with partite sets $X$ and $Y$.

Given two lists $\pi=(\pi_1,\dots,\pi_\ell)$ and $\rho=(\rho_1,\dots,\rho_n)$ of integers, where the entries are the degrees of the vertices in a bipartite graph $H$ with partite sets $X=\{x_1,\dots,x_\ell\}$ and $Y=\{y_1,\dots,y_m\}$, with $\pi_i=\deg_H(x_i)$ and $\rho_i=\deg_H(y_i)$ for all $i$, we define the \emph{canonical realization} $R(\pi; \rho)$ to be the unique bipartite graph with partite sets $X$ and $Y$ having the property that for each $i \in \{1,\dots,\ell\}$, the neighborhood of $x_i$ in $R(\pi;\rho) - \{x_1,\dots,x_{i-1}\}$ is its principal neighborhood in that graph. (Note that as vertices are deleted, the degrees of the remaining vertices may decrease, but the subscript of each vertex never changes.)
\end{dfn}

\begin{exa}
Let $\pi=(2,1,1,3,1)$ and $\rho=(1,2,3,2)$, and consider a vertex set $X=\{x_1,x_2,x_3,x_4,x_5\}$ and $Y=\{y_1,y_2,y_3,y_4\}$, where the $i$th term of $\pi$ (of $\rho$, respectively) is equal to the degree of $x_i$ (of $y_i$). The canonical realization $H=R(\pi; \rho)$ is shown in Figure~\ref{fig: R(a,b)}. Observe that the neighborhood of $x_1$ is $\{y_2,y_3\}$, since $y_3$ has the highest degree (namely, 3) in $H$ and $y_2$ is the lowest-indexed vertex of degree 2. In $H-x_1$, the neighborhood of $x_2$ is $\{y_3\}$, since $y_3$ is the lowest-indexed vertex of the highest degree in $H-x_1$. The neighborhoods of $x_3$, $x_4$, and $x_5$ similarly satisfy the requirements of the definition of $R(\pi; \rho)$.
\end{exa}
\begin{figure}
\centering
\includegraphics[width=8cm]{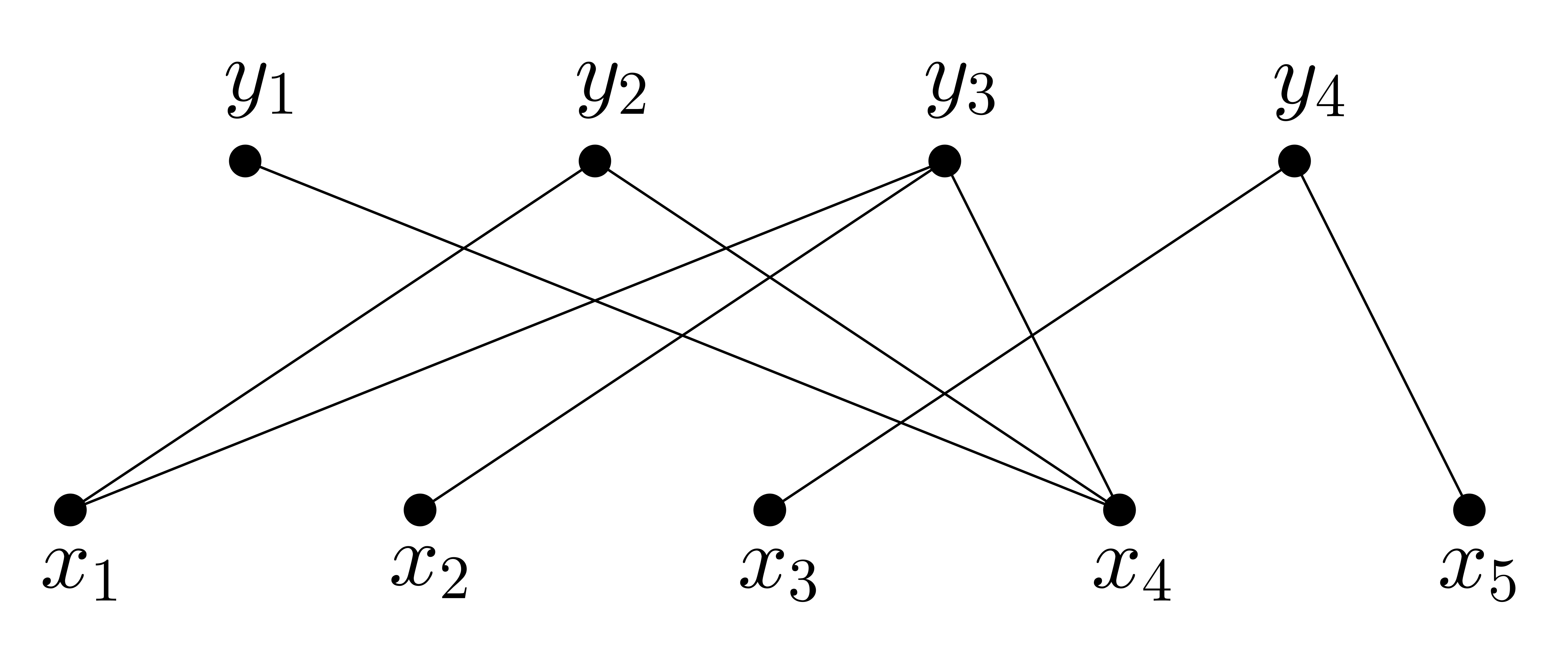}
\caption{The canonical realization $R(\pi;\rho)$ of $\pi = (2,1,1,3,1)$ and $\rho=(1,2,3,2)$.}
\label{fig: R(a,b)}
\end{figure}

Again, we may transform any bipartitioned realization of $(\pi; \rho)$ into $R(\pi;\rho)$. We accomplish this by iteratively performing 2-switches as in Lemma~\ref{lem: maxdegneighborhood}, letting $v$ equal each of $x_1,x_2,\dots$ in turn, with $T=Y$.

Having made these observations, we can now prove our main result.
\begin{thm}\label{thm: Nswitch sequence}
If $G$ and $G'$ are any two realizations of the same neighborhood degree list, then there exists a finite sequence of N-switches which, when applied to $G$, result in the graph $G'$.
\end{thm}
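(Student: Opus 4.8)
The plan is to follow the route sketched in the text: define a canonical realization $C$ of the given NDL, show that every realization can be driven to $C$ by a finite sequence of N-switches, and then invoke reversibility of N-switches to join any two realizations through $C$. The first step is to understand how a single N-switch acts relative to the partition of $V$ into the degree classes $V_k$. Writing $\deg(a)=\deg(b)=p$ and $\deg(c)=\deg(d)=q$ for an N-switch $\{ac,bd\}\rightrightarrows\{ad,bc\}$, there are exactly two cases. If $p=q$, all four vertices lie in $V_p$ and the N-switch is a degree-preserving 2-switch performed entirely within the induced subgraph $G[V_p]$. If $p\neq q$, then $a,b\in V_p$ and $c,d\in V_q$, and all four edges and non-edges run between the two classes, so the N-switch is a 2-switch inside the bipartite graph $G[V_p,V_q]$ of edges joining $V_p$ to $V_q$, along an alternating $4$-cycle with two vertices in each part. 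Conversely, any degree-preserving 2-switch inside some $G[V_k]$ has all four vertices of degree $k$ and is therefore an N-switch, and any bipartite 2-switch inside $G[V_k,V_\ell]$ necessarily has two vertices in each class and is therefore an N-switch. The key point is that each N-switch rewires only edges of a single ``type'' (within one class $V_k$, or between one pair $V_k,V_\ell$), leaving all other pieces of the edge set untouched and preserving the degree sequences $D^k$ and $D^{k,\ell}$ of every piece.

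With this in hand, take the vertex set to be $\{1,\dots,n\}$ with vertex $i$ of degree $d_i$, and define the canonical realization $C$ of the NDL to be the graph in which $C[V_k]$ is the canonical realization $R(D^k)$ for every distinct value $k$ in $d$, and $C[V_k,V_\ell]$ is the bipartite canonical realization $R(D^{k,\ell})$ for every pair of distinct values $k>\ell$, with vertices in each piece indexed by their labels $i$. Since the edge set of any realization partitions into the monochromatic pieces $G[V_k]$ and the bichromatic pieces $G[V_k,V_\ell]$, and since the observations following the definitions of $R(\pi)$ and $R(\pi;\rho)$ guarantee that each such piece can be transformed into its canonical form by the 2-switches of Lemma~\ref{lem: maxdegneighborhood}, it suffices to verify that these canonicalizing 2-switches are N-switches and that processing one piece does not disturb the others.

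By the discussion above, every canonicalizing 2-switch is an N-switch: those applied to a monochromatic piece $G[V_k]$ have all four vertices in $V_k$, and those applied to a bichromatic piece $G[V_k,V_\ell]$ lie, as recorded in the definition of $R(\pi;\rho)$, along alternating $4$-cycles with one vertex of each part, hence two vertices in each class. Because an N-switch on one piece rewires only edges of that piece and leaves the degree sequence of every other piece unchanged, the pieces may be processed one at a time in any order: canonicalize each bipartite piece $G[V_k,V_\ell]$ to $R(D^{k,\ell})$, then each monochromatic piece $G[V_k]$ to $R(D^k)$. The resulting graph agrees with $C$ on every piece and therefore equals $C$, so every realization of the NDL can be carried to $C$ by a finite sequence of N-switches. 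Finally, given two realizations $G$ and $G'$, we obtain N-switch sequences carrying $G$ to $C$ and $G'$ to $C$; since undoing an N-switch is again an N-switch, reversing the second sequence carries $C$ to $G'$, and concatenation carries $G$ to $G'$.

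I expect the main obstacle to be the correspondence established in the first paragraph, namely verifying that N-switches are exactly the degree-preserving within-class and bipartite between-class 2-switches and that they preserve the entire piecewise degree data (equivalently, the NDL). Getting this dichotomy and its type-preservation right is what licenses applying the per-piece canonicalizations of Lemma~\ref{lem: maxdegneighborhood} independently and concluding that their composite lands precisely on the single well-defined graph $C$; the remaining bookkeeping about orderings and the reversibility argument is routine by comparison.
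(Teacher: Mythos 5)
Your proposal is correct and takes essentially the same route as the paper's proof: decompose the edge set into the monochromatic pieces $G[V_k]$ and bichromatic pieces $G[V_k,V_\ell]$, canonicalize each piece to $R(D^k)$ or $R(D^{k,\ell})$ via the 2-switches of Lemma~\ref{lem: maxdegneighborhood} (observing these are N-switches that do not disturb other pieces), and join $G$ to $G'$ through the canonical realization by reversing one sequence. The only cosmetic difference is that you assemble a single global canonical graph $C$ and pass through it, whereas the paper concatenates the forward and reversed sequences piece by piece; the arguments are equivalent.
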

\begin{proof}
Let $G$ and $G'$ be two graphs with the same vertex set $V=\{v_1,v_2,\dots,v_n\}$ such that $\NDL_G(v) = \NDL_{G'}(v)$ for all $v \in V$. For each term $k$ in the degree sequence of $G$, let $V_k$ consist of all vertices of $G$ having degree $k$.

Now for each $v_i \in V$ and each integer $j$ appearing in $\NDL(v_i)$, let $\mu_i^j$ denote the number of terms in $NDL(v_i)$ equal to $j$. 

For each $k$, the induced subgraphs $G[V_k]$ and $G'[V_k]$ have the same degree sequences, since the degree of $v_i$ in each graph is precisely $\mu_i^k$. As we observed above, there exists a sequence of $2$-switches we may perform on $G[V_k]$ to transform the graph into the canonical realization $R(\deg(G[V_k]))$. There is also a similar sequence of $2$-switches that transform $G'[V_k]$ into $R(\deg(G'[V_k]))=R(\deg(G[V_k]))$; reversing this latter sequence of 2-switches and concatenating it to the earlier sequence produces a sequence of 2-switches transforming $G[V_k]$ into $G'[V_k]$. Note that the sequence of 2-switches performed on $G[V_k]$ is in fact a sequence of N-switches performed on $G$.

Now for each pair $k,\ell$ of distinct terms in the degree sequence of $G$ and $G'$, where $k < \ell$, consider the bipartite subgraphs $H_{k,\ell}$ and $H'_{k,\ell}$ of $G$ and $G'$ respectively with partite sets $V_k$ and $V_\ell$, and edge sets consisting of all edges of $G$ and $G'$, respectively, that join vertices from $V_k$ to vertices of $V_\ell$. Note that the degree of each vertex $v_i$ of $V$ appearing in $H_{k,\ell}$ is precisely $\mu_i^\ell$ (if $v_i \in V_k$) or $\mu_i^k$ (if $v_i \in V_\ell$), so each vertex has the same degree in $H_{k,\ell}$ as in $H'_{k,\ell}$. Let $\pi$ denote the list of the terms $\mu^i_\ell$, arranged in increasing order by the superscript $i$, corresponding to the vertices $v_i \in V_k$; let $\rho$ denote the list of terms $\mu_i^k$ corresponding to vertices $v_i \in V_\ell$, again arranged in increasing order by superscript.

As we observed above, there exists a sequence of $2$-switches we may perform on $H_{k,\ell}$ that places no edges within the partite sets $V_k$ and $V_\ell$ and succeeds in transforming $H_{k,\ell}$ into the canonical realization $R(\pi; \rho)$. If we append to this sequence the reverse of a sequence of partite-set-preserving 2-switches that change $H'_{k,\ell}$ into $R(\pi;\rho)$, we get a sequence of 2-switches that transform $H_{k,\ell}$ into $H'_{k,\ell}$. Note that this sequence of partite-set-preserving 2-switches performed on $H_{k,\ell}$ corresponds exactly to a sequence of N-switches performed on $G$.

Finally, note that none of the N-switches described above, in any of the subgraphs induced on $V_k$ or in the bipartite subgraph joining $V_k$ and $V_\ell$, has any effect on any of the other subgraphs involving different values of $k$ and $\ell$. Thus as long as $V$ is finite, we may perform the N-switch sequences described above on each induced subgraph $G[V_k]$ and each bipartite subgraph $H_{k,\ell}$ for all relevant values of $k$, or of $k$ and $\ell$, in turn, and obtain a sequence of N-switches that transforms $G$ into $G'$.
\end{proof}

%%%%%%%%%%%%%%%%%%%%%%%%%%%%%%%%%%%%%%%%%
% Section 4                             %
%%%%%%%%%%%%%%%%%%%%%%%%%%%%%%%%%%%%%%%%%
\section{NDLs with unique labeled realizations}
 \label{sec: NDL-unique}
 
In this section we characterize the neighborhood degree lists having unique labeled realizations and the graphs that realize them.

Recall from Section~\ref{sec: graphic NDLS} the definitions of the lists $d$, $D^k$, and $D^{k,\ell}$ derived from an NDL \[T=((\tau^1_1,\dots,\tau^1_{d_1}),\dots,(\tau^n_1,\dots,\tau^n_{d_n})).\] The following theorem draws together the main ideas from the previous two sections. 

\begin{thm}
\label{thm: unique tableau char}
Let $T=((\tau^1_1,\dots,\tau^1_{d_1}),\dots,(\tau^n_1,\dots,\tau^n_{d_n}))$ be the NDL of a simple graph. The following are equivalent.

\begin{enumerate}
\item $T$ is uniquely realized by a graph;
\item The following conditions both hold for the list $d$ and the lists $D^k$, and $D^{k,\ell}$ for all $k,\ell$ appearing in $d$:
\begin{enumerate}
\item[(a)] for each distinct term $k$ in $d$, the list $D^k$ is the degree sequence of a unique graph;
\item[(b)] for each pair $k,\ell$ of distinct values appearing in $d$, the list $D^{k,\ell}$ is the degree sequence of a unique bipartitioned graph.
\end{enumerate}
\item $T$ has a realization that does not admit any N-switch.
\end{enumerate}
\end{thm}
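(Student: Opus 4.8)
The plan is to prove the equivalence by establishing the cycle of implications $(1) \Rightarrow (3) \Rightarrow (2) \Rightarrow (1)$, leaning heavily on the machinery already developed in Sections~\ref{sec: graphic NDLS} and~\ref{sec: N-switches}. The conceptual heart of the argument is that an N-switch is precisely the tool that moves between distinct realizations of $T$ without changing the NDL, so ``unique realization'' and ``no available N-switch'' should be two sides of the same coin, while condition (2) simply unpacks uniqueness into the component degree-sequence problems that the realization of $T$ decomposes into.

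For $(1) \Rightarrow (3)$, I would argue contrapositively: if some realization $G$ of $T$ admits an N-switch, then performing that N-switch yields a graph $G'$ with the same NDL (as observed just before Theorem~\ref{thm: Nswitch sequence}, every N-switch leaves $\NDL(v)$ unchanged for all $v$). Since an N-switch by definition changes the edge set, $G' \neq G$ as labeled graphs, contradicting uniqueness; hence a unique realization can admit no N-switch, so in particular $T$ has a (its only) realization admitting no N-switch. For $(3) \Rightarrow (1)$, I would use Theorem~\ref{thm: Nswitch sequence}: if $T$ has some realization $G_0$ admitting no N-switch, then for any other realization $G$ there is a finite sequence of N-switches taking $G$ to $G_0$. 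Reading the argument backward, I would show that $G_0$ being N-switch-free forces the sequence to be empty and hence $G = G_0$; the cleanest way is to note that if $G \neq G_0$ then the N-switch sequence from $G$ to $G_0$ is nonempty, and reversing it gives a nonempty N-switch sequence starting at $G_0$, so $G_0$ admits an N-switch, a contradiction.

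The remaining and most substantive part is the equivalence of (1)/(3) with the decomposition condition (2), and this is where I expect the real work to lie. The key structural fact, implicit in the proof of Theorem~\ref{thm: feasible tableau} and made explicit in the proof of Theorem~\ref{thm: Nswitch sequence}, is that any realization of $T$ is determined by, and freely assembled from, the induced subgraphs $G[V_k]$ (realizing $D^k$) together with the bipartite subgraphs $H_{k,\ell}$ joining $V_k$ and $V_\ell$ (realizing $D^{k,\ell}$), and that these pieces can be chosen independently of one another. For $(2) \Rightarrow (1)$, I would observe that if each $D^k$ and each $D^{k,\ell}$ has a unique (bipartitioned) realization, then every piece of any realization of $T$ is forced, so the assembled graph is forced and $T$ is uniquely realized. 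For the converse $(1) \Rightarrow (2)$, I would argue contrapositively: if some $D^k$ (respectively some $D^{k,\ell}$) has two distinct realizations, then swapping one such piece for the other inside a fixed realization of $T$ — while leaving all the other $G[V_j]$ and $H_{j,m}$ untouched — produces a second, distinct realization of $T$, since the independence of the pieces guarantees the result still has NDL $T$.

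The main obstacle I anticipate is not any single implication but pinning down rigorously the claim that the pieces $G[V_k]$ and $H_{k,\ell}$ are genuinely independent ``coordinates'' of a realization: I must verify that an arbitrary realization of $T$ restricts on each $V_k$ to a realization of $D^k$ and on each $V_k \cup V_\ell$ (bipartitely) to a realization of $D^{k,\ell}$, and conversely that any independent choice of such pieces reassembles to a graph whose NDL is exactly $T$. Both directions are essentially contained in the two halves of the proof of Theorem~\ref{thm: feasible tableau}, so I would cite that decomposition rather than reprove it, taking care that ``uniqueness'' is read in the labeled sense (distinct edge sets count as distinct) throughout, matching the convention flagged in the introduction. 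A subtle point to handle cleanly is that a realization of $T$ a priori carries no edges within a single $V_k$ beyond those recorded by $D^k$ and none between $V_k$ and $V_\ell$ beyond those recorded by $D^{k,\ell}$ — every edge of $G$ falls into exactly one such piece according to the degrees of its endpoints — which is what makes the coordinate decomposition a bijection and lets the counting of realizations factor as a product.
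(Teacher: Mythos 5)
Your proposal is correct, and the treatment of the equivalence $(1)\Leftrightarrow(3)$ coincides with the paper's: the forward direction because an N-switch produces a distinct labeled realization with the same NDL, and the reverse via Theorem~\ref{thm: Nswitch sequence} (any other realization would force a nonempty, hence reversible, N-switch sequence at the switch-free realization). Where you genuinely diverge is in how condition (2) is tied in. The paper proves $(2)\Leftrightarrow(3)$ and stays entirely in the language of switches: if some $D^k$ or $D^{k,\ell}$ has two realizations, Theorem~\ref{thm: 2-switches} (respectively Theorem~\ref{thm: Nswitch sequence}) supplies a 2-switch inside $G[V_k]$ or $G[V_k,V_\ell]$ of \emph{every} realization $G$, which is an N-switch of $G$; conversely an N-switch of any realization localizes to a 2-switch in one of these pieces, witnessing non-uniqueness of the corresponding $D^k$ or $D^{k,\ell}$. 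You instead prove $(2)\Leftrightarrow(1)$ by treating the pieces $G[V_k]$ and $G[V_k,V_\ell]$ as independent coordinates and swapping an entire non-unique piece for an alternative realization of the same $D^k$ or $D^{k,\ell}$. Both arguments rest on the same decomposition (every edge lies in exactly one piece, determined by its endpoints' degrees, and the degree of $i\in V_k$ inside the piece indexed by $\ell$ is exactly $\mu_i^\ell$), and your verification that a wholesale piece-replacement preserves the NDL is sound. The paper's route buys economy --- it reuses Theorems~\ref{thm: 2-switches} and~\ref{thm: Nswitch sequence} and never needs to argue that a global substitution preserves $T$ --- while yours makes explicit the stronger structural fact that the set of labeled realizations of $T$ is in bijection with the product of the realization sets of the $D^k$ and $D^{k,\ell}$, which is arguably more illuminating and foreshadows Corollary~\ref{cor: subgraphs are threshold and difference}.
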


\begin{proof}
We show that Condition 3 is equivalent to each of Conditions 1 and 2.

Since an N-switch changes the adjacencies of a graph but does not change the neighborhood degree list of that graph, it is clear that Condition 1 implies Condition 3; the converse follows from Theorem~\ref{thm: Nswitch sequence}.

Suppose that $G$ is an arbitrary realization of $T$, and for all nonnegative integers $k$, let $V_k$ denote the vertices of $G$ with degree $k$. Note that $D^k$ is the degree sequence of $G[V_k]$ for each $k$. If $D^k$ has more than one realization, then by Theorem~\ref{thm: 2-switches} there is a 2-switch possible in $G[V_k]$, and $G$ thus admits an N-switch. Similarly, note that $D^{k,\ell}$ is the degree sequence of the bipartite subgraph of $G$ with vertex set $V_k\cup V_\ell$ whose edge set contains precisely the edges of $G$ having an endpoint in each of $V_k$ and $V_\ell$; denote this graph by $G[V_k,V_\ell]$. If $D^{k,\ell}$ has more than one realization, then by Theorem~\ref{thm: Nswitch sequence} there is an N-switch possible in $G[V_k]$, from which we see that $G$ admits an N-switch. Hence Condition 3 implies Condition 2. Conversely, Condition 2 implies Condition 3, since if every realization of $T$ admits an N-switch, then we may take an N-switch $\{ac,bd\}\rightrightarrows\{ad,bc\}$ in any one of these realizations and find a corresponding 2-switch in $G[V_k,V_\ell]$, where $k=\deg(a)=\deg(b)$ and $\ell=\deg(c)=\deg(d)$ (or in $G[V_k]$, if $k=\ell$), so $D^{k,\ell}$ or $D^k$ is not the degree sequence of a unique bipartitioned graph or graph, respectively.
\end{proof}

The conditions (a) and (b) in Item 2 allow us to characterize graphs that are the unique labeled realizations of their NDLs, and the NDLs of these graphs, in terms of the lists $D^k$ and $D^{k,\ell}$. Before stating the characterization we recall a few definitions.

A \emph{threshold graph}, as defined in~\cite{ChvatalHammer73,ChvatalHammer77}, is a graph $G=(V,E)$ such that there exists a real number $t$ (the ``threshold'') and a real-valued weighting of the vertices of $G$ such that two vertices $u$ and $v$ are adjacent if and only if the sum of their weights is at least $t$. The definition of a \emph{difference graph}, as introduced in~\cite{HammerPeledSun90}, also includes a threshold and a weighting of the vertices; however, here we require that (1) no vertex receive a weight with absolute value larger than $t$, and (2) two vertices $u$ and $v$ are adjacent if and only if the absolute value of the difference of their weights is at least $t$.

The classes of threshold graphs and difference graphs both have several equivalent characterizations (see the monograph~\cite{MahadevPeled95} for a survey and detailed bibliography). We mention a few here. Let $\alpha(\pi)$ and $\beta(\pi)$ denote the partitions defined in Section~\ref{sec: graphic NDLS}.

\begin{thm}[\cite{ChvatalHammer73,FulkersonEtAl65,HammerIbarakiSimeone78, Merris03}]\label{thm: threshold char}
Let $G$ be a graph, and let $d=(d_1,\dots,d_n)$ be the degree sequence of $G$ in descending order, with $m(d)=\max\{i:d_i \geq i-1\}$. The following are equivalent, and each characterizes the class of threshold graphs.
\begin{enumerate}
\item[\textup{(1)}] The graph $G$ is the only simple graph realization of its degree sequence;
\item[\textup{(2)}] The degrees sequence $d$ satisfies the first $m(d)$ Erd\H{o}s--Gallai inequalities with equality; i.e., \[\sum_{i=1}^k d_i = k(k-1)+\sum_{i=k+1}^n \min\{k,d_i\}\] for all $k \{1,\dots,m(d)\}$;
\item[\textup{(3)}] The degree sequence $d$ satisfies $\alpha(d)=\beta(d)$.
\end{enumerate}
\end{thm}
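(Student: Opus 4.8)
The plan is to route everything through Theorem~\ref{thm: 2-switches} and the equality cases of the two graphicality criteria quoted above. By Theorem~\ref{thm: 2-switches}, a graph $G$ is the unique labeled realization of its degree sequence precisely when no 2-switch can be performed on $G$, i.e., when $G$ contains no alternating $4$-cycle. So Condition (1) is equivalent to the purely structural statement that $G$ has no four vertices $a,b,c,d$ with $ac,bd \in E(G)$ and $ad,bc \notin E(G)$. I would first record that such a configuration is exactly an induced copy of $2K_2$, $P_4$, or $C_4$, according as $0$, $1$, or $2$ of the pairs $ab, cd$ are edges; hence Condition (1) is equivalent to $G$ being $\{2K_2,P_4,C_4\}$-free, one of the standard characterizations of threshold graphs (see~\cite{MahadevPeled95}). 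This already shows that Condition (1) characterizes the threshold class, and it remains to match it to the numerical Conditions (2) and (3).

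Next I would treat the equivalence (2)~$\Leftrightarrow$~(3) as a statement purely about the partition $d$. Condition (2) says that every one of the Erd\H{o}s--Gallai inequalities is tight, while by the Merris criterion Condition (3), namely $\alpha(d)=\beta(d)$, is exactly the statement that every Merris dominance inequality $\sum_{i\le k}\alpha_i \le \sum_{i\le k}\beta_i$ is tight (given the common-sum constraint). The link is a Young-diagram bookkeeping identity: for each $k$ in the relevant range, the Erd\H{o}s--Gallai slack $k(k-1)+\sum_{i>k}\min\{k,d_i\}-\sum_{i\le k} d_i$ equals, up to reindexing, the Merris slack $\sum_{i\le k}(\beta_i-\alpha_i)$. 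Reading this identity off the split of the diagram of $d$ into the on-or-above-diagonal part $A(d)$ and the below-diagonal part $B(d)$, and checking that the cutoff $k \le m(d)$ matches the common length appearing in the Merris criterion (both being governed by the Durfee square of $d$), yields (2)~$\Leftrightarrow$~(3).

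Finally I would close the loop by proving that ``no alternating $4$-cycle'' is equivalent to (2), equivalently (3). For the forward direction I would show that if some Erd\H{o}s--Gallai inequality is strict, then in any realization $G$ of $d$ there must be a lower-degree vertex missing an edge to the top block of vertices together with a higher-degree vertex carrying a ``spare'' edge outside that block; these can be arranged into an alternating $4$-cycle, so a 2-switch exists and Condition (1) fails. For the converse, I would argue that when all the inequalities are tight the adjacencies are forced: equality at each $k$ says that the $k$ highest-degree vertices form a clique and each remaining vertex is joined to as many of them as its degree permits, so the neighborhoods are nested and the realization is rigid, leaving no alternating $4$-cycle. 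Combined with the reductions above, this gives (1)~$\Leftrightarrow$~(2)~$\Leftrightarrow$~(3).

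The main obstacle is the interface between the combinatorial and the numerical descriptions. The most delicate step is ``strict inequality $\Rightarrow$ alternating $4$-cycle'': I must exhibit the four vertices explicitly and verify the two required non-edges, rather than merely counting a degree deficiency, and I must ensure the configuration can be located in \emph{every} realization rather than only in some canonical one. Alongside this, the Young-diagram slack identity of the second paragraph needs careful handling of index ranges, since the Erd\H{o}s--Gallai cutoff $m(d)=\max\{i:d_i\ge i-1\}$ and the Durfee side $\max\{i:d_i\ge i\}$ can differ by one; I would check that this discrepancy touches only inequalities whose slack is automatically zero, so that the tightness conditions in (2) and (3) genuinely coincide.
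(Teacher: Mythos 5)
The paper does not actually prove Theorem~\ref{thm: threshold char}; it is quoted from the literature with citations to Chv\'atal--Hammer, Fulkerson et al., Hammer--Ibaraki--Simeone, and Merris, so there is no in-paper argument to compare yours against. Judged on its own, your plan is a sound assembly of the standard proofs and I see no fatal gap. The reduction of Condition (1) to ``no alternating $4$-cycle'' via Theorem~\ref{thm: 2-switches}, and the case analysis showing that such a configuration is exactly an induced $2K_2$, $P_4$, or $C_4$ according to how many of $ab$, $cd$ are edges, are both correct; tying this to the threshold class by citing the forbidden-subgraph characterization is legitimate and is the one place where the actual definition of threshold graphs needs to enter. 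Your slack identity in the second paragraph checks out: for $k$ at most the Durfee size $D$ one has $\alpha_i = d_i - i + 1$ and $\beta_i = d^*_i - i$, so $\sum_{i\le k}(\beta_i-\alpha_i) = \sum_{i\le k} d^*_i - \sum_{i\le k} d_i - k$, which equals the Erd\H{o}s--Gallai slack at $k$ because $\sum_{i\le k} d^*_i = k^2 + \sum_{i>k}\min\{k,d_i\}$ in that range; and the boundary worry resolves favorably, since when $m(d)=D+1$ one has $d_{D+1}=D$ and the slack at $D+1$ exceeds the slack at $D$ by $\left|\{i>D+1 : d_i \ge D+1\}\right| = 0$. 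Two remarks. First, your concern about locating the alternating $4$-cycle in \emph{every} realization is unnecessary: uniqueness in Condition (1) is a property of the degree sequence, so exhibiting a $2$-switch in any single realization already refutes uniqueness for all of them. Second, the converse direction of your third step is where the real work lives; the clean way to carry out your sketch is to observe that for $T$ the set of $k$ highest-degree vertices, $\sum_{v\in T}\deg(v) = 2e(T) + e(T,\overline{T}) \le k(k-1) + \sum_{i>k}\min\{k,d_i\}$, so equality forces $T$ to be a clique with every outside vertex having exactly $\min\{k,\deg\}$ neighbors in $T$; letting $k$ run from $1$ to $m(d)$ then pins down every adjacency and yields rigidity.
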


Difference graphs have been shown to be bipartite graphs \cite{HammerPeledSun90} and appear in many ways to be the bipartite analogue of threshold graphs. In the next theorem, we list a few of their characterizations; the last condition, which resembles the last condition of Theorem~\ref{thm: threshold char}, requires a definition, which we give now.

The \emph{conjugate} of a list $\pi=(\pi_1,\dots,\pi_t)$ of nonnegative integers, denoted $\pi^*$, is defined by $\pi^*_i = \left|\{j:\pi_j \geq i\}\right|$ for $i \in \{1,\dots,\pi_1\}$. Graphically, the Young diagrams of $\pi$ and $\pi^*$ are transposes of each other.

\begin{thm}\label{thm: difference char}
Let $H$ be a bipartite graph with partite sets $X$ and $Y$, and let $\pi=(\pi_1, \pi_2, \dots, \pi_p)$ and $\rho=(\rho_1, \rho_2, \dots \rho_q)$ be the lists of degrees of the vertices in the respective partite sets, indexed in descending order in each list; also assume $\pi_1 \leq p$. The following are equivalent, and each characterizes the class of difference graphs.
\begin{enumerate}
\item[\textup{(1)}] The graph $H$ is the only realization of its two degree lists as a bipartite graph with partite sets $X$ and $Y$;
\item[\textup{(2)}] The graph $H^*$ obtained by adding to $H$ all possible edges between vertices in one of the partite sets is a threshold graph;
\item[\textup{(3)}] The degree lists $\pi$ and $\rho$ satisfy $\sum_{i} \pi_i = \sum_{i} \rho_i$ and for each $k \in \{1,\dots,p-1\}$, \[\sum_{i=1}^k \pi_i = \sum_{i=1}^q \min\{k,\rho_i\}.\]
\item[\textup{(4)}] The degree lists $\pi$ and $\rho$ satisfy $\pi^* = \rho$.
\end{enumerate}
\end{thm}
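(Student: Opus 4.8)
The plan is to fix the descending orderings of $\pi$ and $\rho$ and prove the equivalences $(1)\Leftrightarrow(3)$, $(3)\Leftrightarrow(4)$, and $(1)\Leftrightarrow(2)$, so that all four conditions hang together through Condition~(1). Throughout I will use that every vertex of $Y$ has degree at most $|X|=p$, so that $\min\{k,\rho_i\}$ and the conjugate $\rho^*$ behave as expected; the normalization $\pi_1\le p$ will only be needed for the threshold correspondence in Condition~(2). The engine for $(1)\Leftrightarrow(3)$ is the edge-count behind the Gale--Ryser theorem: for any realization $H$ and any $k$, writing $e_j(k)=|N(y_j)\cap\{x_1,\dots,x_k\}|$, one has $\sum_{i=1}^k\pi_i=\sum_j e_j(k)$ and $e_j(k)\le\min\{k,\rho_j\}$, so summing the termwise bounds $e_j(k)\le\min\{k,\rho_j\}$ over $j$ recovers the Gale--Ryser inequality at level $k$.

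For $(3)\Rightarrow(1)$ I would observe that Condition~(3), together with the sum condition $\sum_i\pi_i=\sum_i\rho_i$ (which holds automatically because $(\pi;\rho)$ is realizable), forces $e_j(k)=\min\{k,\rho_j\}$ for every $j$ and every $k\in\{0,1,\dots,p\}$. Comparing the increments of the two sides as $k$ advances by one shows that $x_k\sim y_j$ precisely when $k\le\rho_j$; this pins down every adjacency and shows $H$ is the unique (Ferrers) realization. For the converse $(1)\Rightarrow(3)$ I argue by contrapositive: if some Gale--Ryser inequality is strict at a level $k_0$, then in any realization some $y_j$ fails its bound, yielding a non-neighbor $x_a$ with $a\le k_0$ and a neighbor $x_b$ with $b>k_0$. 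Since $a\le k_0<b$ gives $\deg(x_a)\ge\deg(x_b)$ while $y_j\in N(x_b)\setminus N(x_a)$, a cardinality comparison produces a vertex $y_{j'}\in N(x_a)\setminus N(x_b)$, and the alternating $4$-cycle on $x_a,x_b,y_j,y_{j'}$ gives a bipartition-preserving $2$-switch, hence a second realization.

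The equivalence $(3)\Leftrightarrow(4)$ is then pure bookkeeping with conjugates: the identity $\sum_j\min\{k,\rho_j\}=\sum_{m=1}^k\rho^*_m$ rewrites Condition~(3) as $\sum_{i=1}^k\pi_i=\sum_{m=1}^k\rho^*_m$ for all $k$, and equality of all partial sums (with equal totals) is equivalent to the termwise identity $\pi=\rho^*$, which by the involutivity $(\rho^*)^*=\rho$ is the same as $\pi^*=\rho$. The only care needed is in matching lengths and trailing zeros, using $\rho_j\le p$.

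Finally, for $(1)\Leftrightarrow(2)$ I would set up a correspondence between the alternating $4$-cycles of $H$ and those of $H^*$, in which $X$ is a clique, $Y$ is independent, and the cross edges are unchanged. A bipartition-preserving alternating $4$-cycle of $H$ (two vertices in each part) remains alternating in $H^*$; conversely, a case analysis on how the four vertices of an alternating $4$-cycle of $H^*$ split between $X$ and $Y$ shows that the clique-on-$X$ and independent-$Y$ structure rules out every distribution except two vertices in each part, forcing the cycle to be cross and hence a bipartition-preserving cycle of $H$. Thus $H$ admits a bipartition-preserving $2$-switch exactly when $H^*$ admits a $2$-switch; combining Theorem~\ref{thm: 2-switches} and Theorem~\ref{thm: threshold char}(1) for $H^*$ with the bipartite connectivity established in the proof of Theorem~\ref{thm: Nswitch sequence} for $H$, uniqueness of $H$ as a bipartite realization is equivalent to $H^*$ being the unique realization of its degree sequence, i.e.\ to $H^*$ being threshold. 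I expect this last case analysis --- pinning down exactly which vertex distributions survive and confirming that the new edges stay within the cross pairs --- to be the main obstacle, together with keeping the descending-order and $\pi_1\le p$ conventions consistent throughout.
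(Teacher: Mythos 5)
Your proof is correct, but it takes a genuinely different and far more self-contained route than the paper. The paper simply cites \cite{HammerPeledSun90,MahadevPeled95} for the equivalence of Conditions (1)--(3) and proves only the single new equivalence $(2)\Leftrightarrow(4)$, doing so by a Young-diagram computation: augmenting $X$ to a clique adds a $p\times(p-1)$ block to the diagram of $\deg(H^*)$, and the threshold condition $\alpha(d)=\beta(d)$ from Theorem~\ref{thm: threshold char} translates into $\pi^*=\rho$ once that block is stripped away. You instead prove everything from scratch: $(1)\Leftrightarrow(3)$ by the termwise Gale--Ryser edge count (tightness of every inequality forces $x_k\sim y_j$ exactly when $k\le\rho_j$, hence the unique Ferrers realization, while a slack inequality yields a bipartition-preserving $2$-switch via the counting argument on $N(x_a)$ and $N(x_b)$); $(3)\Leftrightarrow(4)$ by the identity $\sum_j\min\{k,\rho_j\}=\sum_{m=1}^{k}\rho^*_m$; and $(1)\Leftrightarrow(2)$ by matching alternating $4$-cycles of $H^*$ with bipartition-preserving alternating $4$-cycles of $H$ (your case analysis is right: the clique on $X$ kills any distribution with three or more vertices in $X$ or a non-edge inside $X$, and independence of $Y$ kills any distribution with an edge inside $Y$, leaving only the cross configuration, whose pairs have identical adjacency in $H$ and $H^*$). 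Combined with Theorems~\ref{thm: 2-switches} and~\ref{thm: threshold char} and the bipartite canonical-realization observations preceding Theorem~\ref{thm: Nswitch sequence}, this closes the loop. What your approach buys is independence from the cited literature, running entirely on machinery the paper already develops; what the paper's approach buys is brevity and a visually transparent link between Condition (4) and the $\alpha=\beta$ characterization. The only loose thread in your write-up is the trailing-zero convention in $(3)\Leftrightarrow(4)$ (if $\rho$ contains zeros, $(\rho^*)^*$ drops them), but that ambiguity is already present in the statement of Condition (4) itself and does not affect correctness.
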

\begin{proof}
For proofs of (1)--(3), see~\cite{HammerPeledSun90,MahadevPeled95}. We show that Condition 4 is equivalent to Condition 2.

For any $H$ the addition of all possible edges in one partite set, say $X$, forces the minimum degree of $X$ to be at least as large as the maximum degree of $Y$. Denote the degree sequence of this augmented graph $H^*$ as $d$. Furthermore, we may view the Young diagram of any such graph as in Figure~\ref{subfig: diffgraphedges}: the first $p$ rows correspond to partition $X$ and consist of a $p$ by $p-1$ block for the newly added edges followed by the original edges between $X$ and $Y$, and the remaining $q$ rows for $Y$ lie below. 

Assume $H$ is a difference graph and form $H^*$ by augmenting $X$. By Theorem~\ref{thm: threshold char} we have $\alpha(d) = \beta(d)$ and for any $i$, $1 \leq i \leq p$, $p-i$ has been added to the first $p$ terms of both $\alpha(d)$ and $\beta(d)$. Thus, $\pi^* = \rho$.

Conversely, let $\pi^*=\rho$. Again, augmenting $X$ with all possible edges, we add a $p$ by $p-1$ block to form the Young diagram for $H^*$, yielding $\alpha(d)=\beta(d)$ and Condition 2.
\end{proof}

\begin{figure}[h]
\centering
\begin{subfigure}{0.45\textwidth}
\raisebox{-.47\totalheight}{\includegraphics[height=2.7cm]{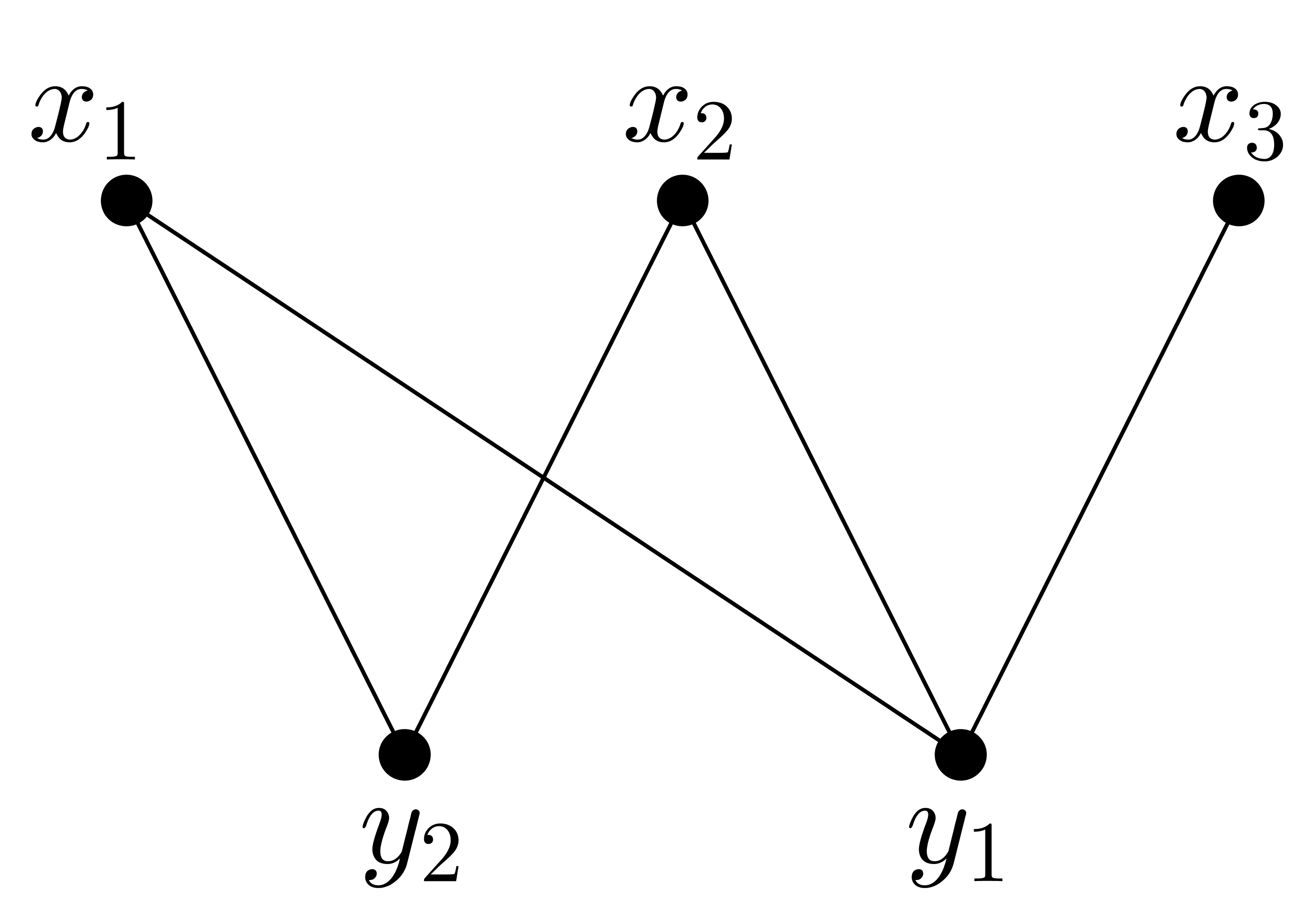}}
\hspace{.25in}
\ydiagram[*(lightgray)]
{0,0,0,0,3,2}
*[*(gray)]{2,2,1,0,3,2}
\caption{The original difference graph.}
\label{subfig: diffgraph}
\end{subfigure}
\vspace{.1in}
\begin{subfigure}{0.45\textwidth}
\raisebox{-.5\totalheight}{\includegraphics[height=2.9cm]{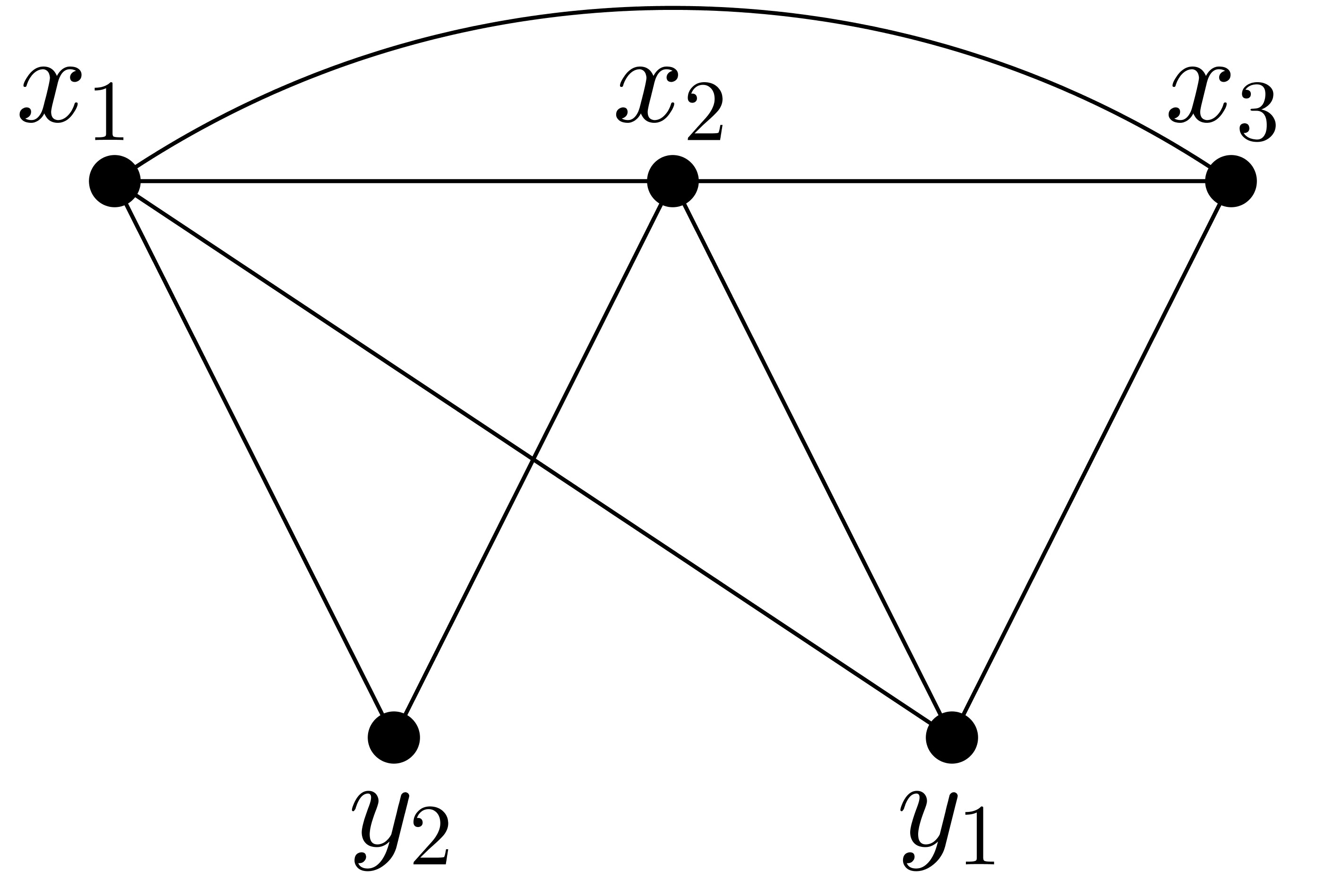}}
\hspace{.25in}
\ydiagram
[*(lightgray)]{0,0,0,3,2}
*[*(white) \bullet]{2,2,2,0,0}
%*[*(gray) \bullet]{2,2,2,0,0}
*[*(gray)]{4,4,3,3,2}
\caption{The augmented graph.}
\label{subfig: diffgraphedges}
\end{subfigure}
%\ydiagram
%[*(lightgray)]{0,0,0,3,2}
%*[*(white)]{2,2,2,0,0}
%*[*(gray)]{4,4,3,3,2}
\caption{The difference graph having degree lists $\pi=(2,2,1)$ and $\rho=(3,2)$, and the graph resulting from adding the extra edges in $X$. Also shown are the corresponding 
Young diagrams. Here, boxes corresponding to vertices in $X$ are shown in dark gray and those corresponding to vertices in $Y$ are shown in light gray.}
\label{fig: diffGraphTrans}
\end{figure}

We now continue with our characterization of NDL-unique graphs.

\begin{cor} \label{cor: subgraphs are threshold and difference}
Let $T=((\tau^1_1,\dots,\tau^1_{d_1}),\dots,(\tau^n_1,\dots,\tau^n_{d_n}))$ be the NDL of a simple graph. If $G$ is a realization of $T$, then $G$ is the unique realization of $T$ if and only if the following conditions both hold for the list $d$ and the lists $D^k$, and $D^{k,\ell}$ for all $k,\ell$ appearing in $d$:
\begin{enumerate}
\item[(a)] for each distinct term $k$ in $d$, the subgraph $G[V_k]$ is a threshold graph;
\item[(b)] for each pair $k,\ell$ of distinct values appearing in $d$, the subgraph $G[V_k,V_\ell]$ is a difference graph.
\end{enumerate}
\end{cor}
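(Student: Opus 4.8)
The plan is to obtain this corollary by combining Theorem~\ref{thm: unique tableau char} with the uniqueness characterizations of threshold and difference graphs recorded in Theorems~\ref{thm: threshold char} and~\ref{thm: difference char}. Theorem~\ref{thm: unique tableau char} already establishes that $T$ has a unique labeled realization if and only if every list $D^k$ is the degree sequence of a unique graph and every list $D^{k,\ell}$ is the degree sequence of a unique bipartitioned graph. Thus the only remaining work is to translate these two uniqueness conditions, phrased in terms of the derived lists, into the structural statements (a) and (b) about the subgraphs of the given realization $G$.

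First I would recall that for any realization $G$ of $T$, the induced subgraph $G[V_k]$ is a realization of $D^k$ and the bipartite subgraph $G[V_k,V_\ell]$ is a realization of $D^{k,\ell}$ with partite sets $V_k$ and $V_\ell$; this is exactly how these lists were defined and was already used in the proof of Theorem~\ref{thm: unique tableau char}. Then for the graph case I would invoke the equivalence of Condition (1) and the threshold property in Theorem~\ref{thm: threshold char}: the list $D^k$ has a unique labeled realization precisely when its realization $G[V_k]$ is the only graph with degree sequence $D^k$, that is, precisely when $G[V_k]$ is a threshold graph. Symmetrically, for the bipartite case I would invoke the equivalence of Condition (1) and the difference-graph property in Theorem~\ref{thm: difference char}: the list $D^{k,\ell}$ has a unique bipartitioned realization precisely when $G[V_k,V_\ell]$ is the only bipartite graph with those two degree lists on partite sets $V_k,V_\ell$, i.e.\ precisely when $G[V_k,V_\ell]$ is a difference graph. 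Running these equivalences in both directions over all $k$ and all pairs $k,\ell$ converts Condition 2 of Theorem~\ref{thm: unique tableau char} into the present conditions (a) and (b).

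The point that requires the most care, and that I would make explicit, is that conditions (a) and (b) are stated for a \emph{fixed} realization $G$, whereas the hypotheses of Theorem~\ref{thm: unique tableau char} concern the lists $D^k$ and $D^{k,\ell}$ themselves; I must verify that the translation is valid for the particular $G$ at hand rather than merely for \emph{some} realization. This is not a genuine obstacle but a consequence of the fact that the threshold and difference characterizations are themselves uniqueness statements: if $D^k$ is uniquely realizable then its realization $G[V_k]$ must be that unique (threshold) graph, while conversely if $G[V_k]$ happens to be threshold then $D^k$ can have no other realization. Hence each of conditions (a) and (b) holds for one realization of $T$ if and only if it holds for every realization, which is exactly what is needed to pass freely between the two formulations. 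Beyond assembling these already-established equivalences, no new difficulty arises.
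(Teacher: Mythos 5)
Your proposal is correct and follows exactly the route the paper takes: the paper's own proof is a one-line citation of Theorems~\ref{thm: unique tableau char}, \ref{thm: threshold char}, and~\ref{thm: difference char}, which is precisely the combination you assemble. Your additional care about passing from a statement about the lists $D^k$, $D^{k,\ell}$ to a statement about the fixed realization $G$ is a sound elaboration of a detail the paper leaves implicit.
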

\begin{proof}
Conditions (a) and (b) follow from Theorems~\ref{thm: unique tableau char}, \ref{thm: threshold char}, and \ref{thm: difference char}.
\end{proof}

As mentioned above, the classes of both threshold and difference graphs have several remarkable properties and characterizations beyond those mentioned in the previous two theorems. In particular, each class has a forbidden subgraph characterization; threshold graphs are precisely those graphs containing no induced subgraph isomorphic to $2K_2$, $C_4$, or $P_4$, and difference graphs are precisely the bipartite graphs with no induced subgraph isomorphic to $2K_2$. Note that NDL-unique graphs are not closed under taking induced subgraphs (for instance, the graph obtained by attaching a pendant vertex to a 4-cycle is NDL-unique but contains an induced subgraph isomorphic to the non-NDL-unique $C_4$). Thus no list of forbidden induced subgraphs exists for a traditional forbidden subgraph characterization for NDL-unique graphs. However, in light of Corollary~\ref{cor: subgraphs are threshold and difference}, we can describe NDL-unique graphs in terms of subgraphs forbidden \emph{in certain positions}, namely, among vertices with the same degree or among the collection of vertices having one of two given degrees (with subgraph edges having endpoints with different degrees). Similar modifications of any of the characterizations of threshold and difference graphs may yield many different algorithms for recognizing NDL-unique graphs.

Our last characterization of NDL-unique graphs concerns their degree sequences. Recall from Section~\ref{sec: graphic NDLS} that $D^{k,\ell}$ is a bipartitioned list of lists of the form $(\mu^{\ell}_i; \mu^k_j)$ for all $i$ in $V_k$ and all $j$ in $V_{\ell}$. Let us denote this first partition as $D^{k,\ell}_X$ and the second partition as $D^{k,\ell}_Y$. We may view these partitions in the original tableau by extracting two smaller tableaux; the first is composed of those boxes from rows of length $k$ containing a $\ell$ and the second as the boxes from rows of length $\ell$ containing a $k$. 

\begin{cor}
The graph $G$ with degree sequence $d$ is the unique realization of its neighborhood degree list if and only if for every distinct $k, \ell$ in $d$, $\alpha(D^k) = \beta(D^k)$ and $D^{k,\ell}_X = (D^{k,\ell}_Y)^*$.
\end{cor}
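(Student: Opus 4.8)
The plan is to obtain this corollary by translating the two structural conditions of Corollary~\ref{cor: subgraphs are threshold and difference} into the numerical conditions supplied by the degree-sequence characterizations of threshold and difference graphs. By Corollary~\ref{cor: subgraphs are threshold and difference}, the graph $G$ is the unique realization of its NDL if and only if $G[V_k]$ is a threshold graph for each distinct term $k$ in $d$ and $G[V_k,V_\ell]$ is a difference graph for each pair of distinct $k,\ell$ in $d$. So it suffices to reexpress each of these two graph-theoretic properties purely in terms of the associated degree data $D^k$ and $D^{k,\ell}$, and then combine the equivalences over all relevant $k$ and pairs $k,\ell$.

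First I would handle the threshold condition. Since $D^k$ is exactly the degree sequence of $G[V_k]$, the equivalence of Conditions (1) and (3) in Theorem~\ref{thm: threshold char} shows that $G[V_k]$ is a threshold graph precisely when $\alpha(D^k)=\beta(D^k)$. This yields the first half of the claimed characterization with essentially no further work. Next I would handle the difference condition. The two partite sets of the bipartite graph $G[V_k,V_\ell]$ have, as their degree lists, exactly the two parts $D^{k,\ell}_X$ and $D^{k,\ell}_Y$ of the bipartitioned list $D^{k,\ell}$: here $D^{k,\ell}_X$ records the degrees $\mu^\ell_i$ of the vertices $i\in V_k$ and $D^{k,\ell}_Y$ records the degrees $\mu^k_j$ of the vertices $j\in V_\ell$ within that bipartite graph. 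By the equivalence of Conditions (1) and (4) in Theorem~\ref{thm: difference char}, $G[V_k,V_\ell]$ is a difference graph precisely when $(D^{k,\ell}_X)^{*}=D^{k,\ell}_Y$. Since conjugation of partitions is an involution, this is equivalent to $D^{k,\ell}_X=(D^{k,\ell}_Y)^{*}$, which is the second half of the claimed characterization.

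The one point requiring care—and the only real obstacle—is matching the conventions between the definitions so that Theorem~\ref{thm: difference char} applies verbatim. In particular I would verify that $D^{k,\ell}_X$ and $D^{k,\ell}_Y$ legitimately play the roles of the lists $\pi$ and $\rho$ under that theorem's normalizing hypothesis $\pi_1\le p$, i.e.\ that one of the two partite sets may be taken as the one to augment with all internal edges. Because the conjugate relation is symmetric—taking conjugates of both sides of $\pi^{*}=\rho$ gives $\pi=\rho^{*}$—the resulting condition is the same regardless of which partite set is designated $X$, so the apparent asymmetry in the hypothesis $\pi_1\le p$ causes no difficulty. I would also note that all of the quantities involved ($\alpha$, $\beta$, and the conjugate $(\cdot)^{*}$) depend only on the underlying multisets of degrees, so the fact that $D^k$ and the parts of $D^{k,\ell}$ are indexed by increasing vertex index rather than in descending order is immaterial; the lists may be reordered freely before applying the cited theorems. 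Once these bookkeeping matters are settled, conjoining the two equivalences across all $k$ and all pairs $k,\ell$ delivers the stated biconditional.
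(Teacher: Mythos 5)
Your proposal is correct and follows exactly the route the paper takes: the paper's proof is simply the one-line observation that the corollary is a direct consequence of Corollary~\ref{cor: subgraphs are threshold and difference} combined with condition (3) of Theorem~\ref{thm: threshold char} and condition (4) of Theorem~\ref{thm: difference char}. Your additional remarks on conjugation being an involution and on the irrelevance of list ordering are sound bookkeeping that the paper leaves implicit.
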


\begin{proof}
This is a direct consequence of Theorem~\ref{thm: threshold char}, Theorem~\ref{thm: difference char}, and Corollary~\ref{cor: subgraphs are threshold and difference}.
\end{proof}

\end{document}